\def\X{S}
\def\sO{\mathscr{O}}
\def\sA{\mathscr{A}}
\def\scrQ{\ensuremath{\mathscr{Q}}}
\def\frkJ{\mathfrak{J}}
\def\frkL{\mathfrak{L}}
\def\fB{{\mathfrak B}}
\def\fN{{\mathfrak N}}
\def\fp{\mathfrak{P}}
\def\A{\mathfrak{A}}
\def\bz{{{\bm 0}}}
\def\Lb{\textit{\textbf{L}}}
\def\Xb{\textit{\textbf{X}}}
\def\Thetab{\bm{\Theta}}
\def\thetab{\bm{\vartheta}}
\def\mA{{\bm A}}
\def\ub{{\bm u}}
\def\xb{{\bm x}}
\def\yb{{\bm y}}
\def\ab{\textit{\textbf{a}}}
\def\pjb{\overline{P_{j}}}
\def\vt{\vartheta}
\def\D{\Delta}
\DeclareMathOperator{\GL}{GL}
\DeclareMathOperator{\htt}{ht}
\DeclareMathOperator{\sing}{sing}
\DeclareMathOperator{\jac}{jac}
\DeclareMathOperator{\grad}{grad}
\DeclareMathOperator{\reg}{reg}
\DeclareMathOperator{\rk}{rank}
\def\pa{\partial}
\def\dt{s}
\newcommand{\ZZ}{{\mathbb{Z}}}
\def\C{\mathbb{C}}
\def\Q{\mathbb{Q}}
\def\R{\mathbb{R}}
\def\pr{\mathbb{P}}
\def\bbm{\begin{bmatrix}}
\def\ebm{\end{bmatrix}}
\def\bmat{\begin{matrix}}
\def\emat{\end{matrix}}
\def\gi{\Gamma_i}
\newtheorem{theorem}{Theorem}[section]
\newtheorem{corollary}[theorem]{Corollary}
\newtheorem{lemma}[theorem]{Lemma}
\newtheorem{ex}[theorem]{Example}
\newtheorem{prop}[theorem]{Proposition}
\newtheorem{remark}[theorem]{Remark}
\providecommand{\keywords}[1]
{
  \smallskip\noindent\small	
  \textbf{\textbf{Keywords---}} #1
}
\title{Bit complexity for computing one point in each connected component of a smooth real algebraic set}
\author{Jesse Elliott, Mark Giesbrecht, \'Eric Schost  \\
  \small David R. Cheriton School of Computer Science, University of Waterloo, On, Canada \\
}
\date{} 
\begin{document}
\maketitle
%
%
%
\begin{abstract}
We analyze the bit complexity of an algorithm for the computation of
at least one point in each connected component of a smooth real
algebraic set. This work is a continuation of our analysis of the
hypersurface case ({\em On the bit complexity of finding points in
  connected components of a smooth real hypersurface}, ISSAC'20). In
this paper, we extend the analysis to more general cases.

Let $F=(f_1,\hdots, f_p)$ in $\mathbb{Z}[X_1, \hdots , X_n]^p$ be a
sequence of polynomials with $V = V(F) \subset \C^n$ a smooth and
equidimensional variety and $\langle F \rangle \subset \C[X_1, \hdots
  , X_n]$ a radical ideal. To compute at least one point in each
connected component of $V \cap \mathbb{R}^n$, our starting point is an
algorithm by Safey El Din and Schost ({\em Polar varieties and
  computation of one point in each connected component of a smooth
  real algebraic set}, ISSAC'03). This algorithm uses random changes
of variables that are proven to generically ensure certain desirable
geometric properties. The cost of the algorithm was given in an
algebraic complexity model; here, we analyze the bit complexity and
the error probability, and we provide a quantitative analysis of the
genericity statements. In particular, we are led to use Lagrange
systems to describe polar varieties, as they make it simpler to
rely on techniques such as weak transversality and an effective 
Nullstellensatz.

\keywords{Real algebraic geometry; weak transversality; Noether
  position; complexity}
\end{abstract}
%


\section{Introduction}

\paragraph*{Background and problem statement.}
Computing one point in each connected component of a real algebraic
set $S$ is a basic subroutine in real algebraic and semi-algebraic
geometry; it is also useful in its own right, since it allows one to
decide if $S$ is empty or not. 

We consider the case where $S$ is given as $S=V \cap \R^n$, where
$V=V(F) \subset \C^n$ is a complex algebraic set defined by a sequence
of polynomials $F = (f_1,\hdots,f_p)$ in
$\ZZ[X_1,\dots,X_n]^p$. Algorithms for this task have been known for
decades, and their complexity is to some extent well
understood. Suppose that all $f_i$'s have degree at most $d$, and
coefficients of bit-size at most $b$. Without making any assumption on
these polynomials, the algorithm given
in~\cite[Section~13.1]{BaPoRo03} solves our problem using $d^{O(n)}$
operations in $\Q$; in addition, the output of the algorithm is
represented by polynomials of degree $d^{O(n)}$, with coefficients of
bit-size $hd^{O(n)}$. The key idea behind this algorithm goes back
to~\cite{GrVo88}: sample points are found through the computation of
critical points of well-chosen functions on $V$.

The number of connected components of $V$ admits the lower bound
$d^{\Omega(n)}$, so up to polynomial factors this result is
optimal. However, due to the generality of the algorithm, the constant
hidden in the exponent $O(n)$ in its runtime turns out to be rather
large: the algorithm relies on infinitesimal deformations, that affect
runtime non-trivially.

In this paper, we will work under the additional assumption that
$V=V(f_1,\dots,f_p)$ is a {\em smooth} complex algebraic set,
equidimensional of dimension $\delta=n-p$, and that $f_1,\dots,f_p$
generate a radical ideal (we explain these terms in the next
section). We place ourselves in the continuation of the line of work
initiated by~\cite{BaGiHeMb97}: that reference deals with cases where
$V$ is a smooth hypersurface and $V \cap \R^n$ is compact, pointing
out how {\em polar varieties} (that were introduced in the 1930's in
order to define characteristic classes~\cite{Piene78,Teissier88}) can
play a role in effective real geometry. This paper was extended in
several directions: to $V$ being a smooth complete intersection, still
with $V\cap \R^n$ compact~\cite{BaGiHeMb01}, then without the
compactness assumption~\cite{EMP,BaGiHePa05}; the smoothness
assumption was then partly dropped in~\cite{BaGiHe14,BaGiHeLePa12}.

Our starting point is the algorithm in~\cite{EMP}, whose assumptions
are slightly more general than ours ($V$ is not required to have
dimension $\delta=n-p$). In the cases we consider in this paper, its
runtime is ${ n \choose p}{}^{2+o(1)}d^{(4+o(1))n}$ operations in
$\Q$.  As with many results in this vein, the algorithm is randomized,
as we need to assume that we are in generic coordinates; this is done
by applying a random change of coordinates prior to all
computations. In addition, the algorithm relies on procedures for
solving systems of polynomial equations that are themselves
randomized.  Altogether, we choose $n^{O(1)}$ random vectors, each of
them in an affine space of dimension $n^{O(1)}$; every time a choice
is made, there exists a hypersurface of the parameter space that one
has to avoid in order to guarantee success. In this paper, we revisit
this algorithm, modify it in part, and give a complete analysis of its
probability of success and its bit complexity.

This work is a continuation of the analysis of the hypersurface case
that we gave in~\cite{ElGiSc20} (that is, the case $p=1$). A very
useful property in the hypersurface case is that polar varieties can
be described by straightforward equations (the partial derivatives of
the input polynomial) that form a regular sequence, at least in
generic coordinates. In higher codimension, this is not the case
anymore: the natural description of polar varieties now involves
minors of the Jacobian matrix of the input equations (this is the
approach used in~\cite{EMP}). The resulting equations are in general
not a complete intersection anymore, which makes it impossible to
extend directly several arguments we used in~\cite{ElGiSc20}.

Our solution is to use a description of polar varieties by means of
so-called Lagrange equations. These equations are complete
intersections (in generic coordinates), but they involve more
variables. As such, they describe algebraic sets that cover polar
varieties; we will discuss in detail the relationship between these
two presentations, using in particular several results
from~\cite{BaGiHeSaSh10,TWT}.

\paragraph*{Data structures.} 
The output of the algorithm is a finite set in $\overline{\Q}{}^n$. To
represent it, we rely on a widely used data structure based on
univariate
polynomials~\cite{Kronecker82,Macaulay16,GiMo89,GiHeMoPa95,ABRW,GiHaHeMoMoPa97,GiHeMoMoPa98,Rouillier99}.
For a zero-dimensional algebraic set $S \subset \C^n$ defined over
$\Q$, a {\em zero-dimensional parameterization}
$\scrQ=((q,v_1,\dots,v_n),\lambda)$ of $S$ consists in polynomials
$(q,v_1,\dots,v_n)$, such that $q\in \Q[T]$ is monic and squarefree,
all $v_i$'s are in $\Q[T]$ and satisfy $\deg(v_i) < \deg(q)$, and in a
$\Q$-linear form $\lambda$ in variables $X_1,\dots,X_n$, such that
\begin{itemize}
\item $\lambda(v_1,\dots,v_n)=T q' \bmod q$;
\item we have the equality
  $S=\left \{\left(
      \frac{v_1(\tau)}{q'(\tau)},\dots,\frac{v_n(\tau)}{q'(\tau)}\right
    ) \ \mid \ q(\tau)=0 \right \}.$
\end{itemize}
The constraint on $\lambda$ says that the roots of $q$ are the values
taken by $\lambda$ on $S$. The parameterization of the coordinates by
rational functions having $q'$ as a denominator goes back
to~\cite{Kronecker82,Macaulay16}: as pointed out in~\cite{ABRW}, it
allows one to control precisely the size of the coefficients of
$v_1,\dots,v_n$.

\paragraph*{Main result.} To state our main result, we need to define 
the {\em height} of a rational number, and of a polynomial with
rational coefficients.

The {\em height} of a non-zero $a=u/v \in \Q$ is the maximum of
$\ln(|u|)$ and $\ln(v),$ where $u \in \mathbb{Z}$ and $v \in
\mathbb{N}$ are coprime. For a polynomial $f$ with rational
coefficients, if $v \in \mathbb N$ is the minimal common denominator
of all non-zero coefficients of $f$, then the \textit{height}
$\htt(f)$ of $f$ is defined as the maximum of the logarithms of $v$
and of the absolute values of the coefficients of $vf$.

\begin{theorem}\label{theo:main}
  Let $F= (f_1,\hdots,f_p)\in\ZZ[X_1,\hdots,X_n]^p$ be a sequence of
  polynomials with $\deg(f_i) \leq d$ and $\htt(f_i) \leq b$. Suppose
  that the ideal generated by $f_1,\dots,f_p$ is radical and that
  $V=V(F) \subset \C^n$ is smooth and equidimensional of dimension
  $n-p$. Also suppose that $0 < \epsilon < 1.$ 

  There exists a randomized algorithm that takes $F$ and $\epsilon$ as
  input and produces $n-p+1$ zero-dimensional parameterizations, the union
  of whose zeros includes at least one point in each connected
  component of $V(F) \cap \R^n$, with probability at least
  $1-\epsilon$. Otherwise, the algorithm either returns a proper
  subset of the points, or FAIL.  In any case, the algorithm uses
  \[
  O^{\sim}(d^{3n+2p+1}\log(1/\epsilon)(b + \log(1/\epsilon)))
  \]
  bit operations. The polynomials in the output have degree at most
  $d^{n+p},$ and height
  \[
  O^{\sim}(d^{n+p+1}(b + \log(1/\epsilon))).
  \]
\end{theorem}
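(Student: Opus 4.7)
The plan is to follow the framework of~\cite{EMP}, but with polar varieties replaced throughout by their Lagrange-system descriptions, as we did for hypersurfaces in~\cite{ElGiSc20}. First, after drawing a random change of variables $\mA \in \GL_n(\Q)$, we compute, for each $i = 0, \dots, \delta = n-p$, a zero-dimensional parameterization of the intersection of the $i$-th polar variety $W(\pi_i, V^{\mA})$ with a generic linear section of the appropriate codimension; a classical geometric argument going back to~\cite{BaGiHeMb97} and used in~\cite{EMP} shows that, in generic coordinates, the union of these finite sets meets every connected component of $V \cap \R^n$. The new ingredient is that, rather than cutting out $W(\pi_i, V^{\mA})$ by the minors of the truncated Jacobian $\jiA$, we work with the Lagrange system $\lagFA$, which introduces $p$ auxiliary unknowns and has the sought polar variety as the image of one of its components; by the results recalled from~\cite{BaGiHeSaSh10,TWT}, this system is, in generic coordinates, a reduced complete intersection of the expected dimension.

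Once the Lagrange systems are written down, we feed each of the $\delta+1$ of them to a Kronecker-style geometric resolution algorithm. Having a complete intersection is what makes this step tractable: its bit complexity depends (softly linearly) on the number of variables $n+p$, the B\'ezout bound $d^{n+p}$ on the geometric degree, and the height of the input, which itself depends on $b$ and on the heights of the random parameters. Summing these contributions across all $i$, and accounting for the height inflation induced by the change of variables, is expected to yield the claimed $O^{\sim}(d^{3n+2p+1}\log(1/\epsilon)(b+\log(1/\epsilon)))$ bit complexity; the matching $O^{\sim}(d^{n+p+1}(b+\log(1/\epsilon)))$ height bound on the output parameterizations will follow from an arithmetic B\'ezout estimate applied to each Lagrange system.

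The main obstacle is the quantitative genericity analysis, as the algorithm relies on several random draws, each of which must avoid a Zariski-closed ``bad'' locus in its parameter space. Concretely, $\mA$ must place $V$ in Noether position and ensure that all polar varieties $W(\pi_i, V^{\mA})$ are smooth and equidimensional; the random parameters entering $\lagFA$ must make that system a radical complete intersection of the expected dimension; and the separating linear form used to build each output parameterization must take pairwise distinct values on the corresponding finite output. We use weak transversality to realise each bad locus as the zero set of an explicit polynomial in the random parameters, then invoke an effective Nullstellensatz (in the spirit of~\cite{ElGiSc20}) to bound its degree by $d^{O(n)}$. A single application of Zippel--Schwartz, with each coordinate of each random choice drawn from an integer range of size $d^{O(n)}/\epsilon$, then yields overall success probability at least $1-\epsilon$ and inflates the height of the random data only by an additive $\log(d^{O(n)}/\epsilon)$, which is already reflected in the final bounds.
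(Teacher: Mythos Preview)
Your plan matches the paper's approach: Lagrange systems in place of minors, weak transversality for the equidimensionality and regularity conditions, an effective Nullstellensatz for Noether position, the solver of~\cite{SH} for the resulting square systems, and DeMillo--Lipton--Schwartz--Zippel to convert degree bounds into failure probabilities. Two points to sharpen. First, it is not $V$ that must be placed in Noether position by $\mA$, but each polar variety $W(i,F^{\mA})$ for the projection $\pi_{i-1}$; the paper emphasizes that since $W(i,F^{\mA}) \ne W(i,F)^{\mA}$ in general, standard quantitative Noether-position statements for a fixed variety do not apply, and this is precisely where the effective Nullstellensatz is invoked---on the Lagrange membership relations over the function field $\C((\mathfrak A_{j,k}))$---rather than as a generic tool for all bad loci. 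Second, smoothness of the polar varieties is neither needed nor established; the relevant condition is that $0$ be a regular value of the Lagrange polynomials on the open set $\bm L \ne \bm 0$ (this is the paper's $\bm H_i(2)$), which is what yields the radical complete intersection property you want.
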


Here we assume that $F$ is given as a sequence of polynomials in dense
representation.  Following references such
as~\cite{GiHeMoPa95,GiHaHeMoMoPa97,GiHeMoMoPa98,BaGiHeMb97,EMP}, it
would be possible to refine the runtime estimate by assuming that $F$
is given by a {\em straight-line program} (that is, a sequence of
operations $+,-,\times$ that takes as input $X_1,\dots,X_n$ and
evaluates $F$). Any polynomial of degree $d$ in $n$ variables can be
computed by a straight-line program that does $O(d^n)$ operations:
evaluate all monomials of degree up to $d$ in $n$ variables, multiply
them by their respective coefficients and sum the results. However,
some inputs may be given by a shorter straight-line program, and the
algorithm would actually benefit from this.

The algorithm itself is rather simple. To describe it, we need to
define {\em polar varieties}, which will play a crucial role in this
paper. Let $V=V(F)$, for $F=(f_1,\dots,f_p)$ as in the theorem. For $i
\in \{1,\hdots,n-1\},$ denote by $\pi_i:\C^n \rightarrow \C^i$ the
projection $(x_1,\hdots,x_n) \mapsto (x_1,\hdots,x_i)$.  The $i$-th
\textit{polar variety} \[W(i,F) := \{\xb \in V~|~\dim \pi_i(T_\xb
V) < i\}\] is the set of critical points of $\pi_i$ on $V$.  We will
recall below that it is defined by the vanishing of all $p$-minors
$M_{i,1},\dots,M_{i,S_i}$ of the last $n-i$ columns of the Jacobian
matrix of $F$, together with the equations $F$ themselves (here, $S_i$
is simply the binomial number $\binom{n-i}{p}$).

In general, we cannot say much about the geometry of $W(i,F)$, but if
we apply a generic change of coordinates $\mA$ to $F$, then $W(i,F)$
is known to be equidimensional of dimension $(i-1)$ or
empty~\cite{BaGiHeMb97,BaGiHeSaSh10,TWT}, and to be in so-called {\em
  Noether position}~\cite{EMP} (background notions in algebraic
geometry are in~\cite{Mumford76,Shafarevich77,ECA}; we will recall key
definitions). If this is the case, the algorithm in~\cite{EMP} chooses
arbitrary $\sigma_1,\dots,\sigma_{n-1}$ in $\Q$ and solves the systems
defined by
\begin{equation}\label{eq:syst1}
  X_1-\sigma_1 = \dots = X_{i-1}-\sigma_{i-1} = f_1 = \cdots = f_p = M_{i,1} = \cdots = M_{i,S_i} = 0
\end{equation}
for $i=1,\dots,n-p+1$.  They all admit finitely many solutions, and
Theorem~2 in~\cite{EMP} proves that the union of their solution sets
contains one point on each connected component of $V \cap \R^n$.

One of our contributions is to analyze precisely what conditions on
the change of coordinates $\mA$ guarantee success. This is done by
revisiting the key ingredients in the proofs given
in~\cite{BaGiHeSaSh10,EMP}, and giving quantitative versions of these
results, bounding the degrees of the hypersurfaces we have to avoid.

We actually do not solve the equations~\eqref{eq:syst1}, since the
(large) number of minors $S_i$ makes this analysis difficult.  Instead,
we replace~\eqref{eq:syst1} by equations involving Lagrange
multipliers. Proving correctness requires us to guarantee further
genericity properties, but once this is done, we can rely on the algorithm
in~\cite{SH} to solve these equations, for which a complete bit
complexity analysis is available.

\paragraph*{Further work.}
This paper is an extension of~\cite{ElGiSc20}, where the analysis was
done for the hypersurface case. In addition, this work should also be
seen as a step toward the analysis of further randomized algorithms in
real algebraic geometry.  In particular, randomized algorithms for
deciding {\em connectivity queries} on smooth, compact algebraic sets
have been developed in a series of papers
\cite{SchostMohabBabySteps2011,SchostMohabBabySteps2014}, and could be
revisited using the techniques introduced here. The techniques would
apply to algorithms in real algebraic geometry where transversality or
Noether position are required geometric properties established by a
random change of coordinates.

\paragraph*{Outline.} The next section summarizes the main concepts from 
algebraic geometry needed in this paper. In Section~\ref{ssec:detvar},
we compare the descriptions of determinantal varieties by the
vanishing of matrix minors, and through the use of Lagrange
multipliers; these results, while rather simple, are used throughout.
A first application is in Section~\ref{sec:wt}, where we give a
quantitative form of Thom's ``weak transversality lemma''.

Section~\ref{sec:overview} introduces polar varieties and discusses the
algorithm sketched above and the genericity conditions required for it
to succeed. These conditions are studied in detail in
Sections~\ref{sec:applications},~\ref{ssec:Hi2} and~\ref{Sec:Hip};
this allows us to complete the analysis of the algorithm in
Section~\ref{sec:analysis}, thereby proving Theorem~\ref{theo:main}.


\section{Preliminaries}

In this section, we gather several basic definitions and properties of
algebraic sets and locally closed sets. General references for this
material are~\cite{Mumford76,Shafarevich77,ECA}.

\paragraph*{Algebraic sets.} An algebraic set $V \subset \C^n$ is the set of common 
zeros of an ideal $I$ in $\C[X_1,\hdots,X_n].$ Conversely, the ideal
of a subset $V$ of $\C^n$, that is, the set of polynomials in
$\C[X_1,\dots,X_n]$ that vanish at all points of $V$, is called the
\textit{ideal} of $V$; this is a radical ideal, which we write $I(V)$.

The smallest algebraic set containing an arbitrary set $Y$ is called 
the {\em Zariski closure} of $Y$ and written $\overline Y$.

\paragraph*{Irreducible decomposition.}
An algebraic set $V \subset \C^n$ is \textit{irreducible} when $V =
V_1 \cup V_2$, with $V_1,V_2$ algebraic sets, implies $V=V_1$ or
$V=V_2$; this is the case if and only if $I(V)$ is prime.  An
algebraic set $V \subset \C^n$ can be decomposed into a finite union
of irreducible algebraic sets
\[
V = V_1 \cup V_2 \cup \cdots \cup V_r,
\]
with $V_i \not \subset V_j$ for all $i \ne j$. The sets
$V_1,\dots,V_r$ are called the {\em irreducible components} of $V$;
they are uniquely defined, up to order. In terms of ideals, $I(V)$
being radical, it admits a decomposition as an irredundant intersection
of prime ideals $I_1,\dots,I_r$; the irreducible algebraic sets
$V(I_1),\dots,V(I_r)$ are the irreducible components of $V$.

\paragraph*{Dimension.}
The \textit{dimension} of an algebraic set $V \subset \C^n$, denoted
$\dim(V)$, can be defined as the unique integer $d$ such that $V \cap
H_1 \cap \cdots \cap H_d$ is finite, but not empty, for a generic
choice of hyperplanes $H_1,\dots,H_d$. The \textit{codimension} of $V$
is $n - \dim (V)$. 

An algebraic set $V$ is \textit{equidimensional} if each of its
irreducible components has the same dimension; if each component has
dimension $d$ then we say that $V$ is $d$-equidimensional.

\paragraph*{Degree.}
We use the definition of degree from~\cite{H}: the \textit{degree}
$\deg(V)$ of an irreducible algebraic set $V$ is the number of
intersection points between itself and $\dim (V)$ generic hyperplanes,
and the degree of an arbitrary algebraic set is defined as the sum of
the degrees of its irreducible components.

The degree of a hypersurface defined by a squarefree polynomial $f$ is
$\deg(f)$. We particularly care about algebraic sets of dimension zero; by
definition, these sets are finite and their degree is equal to their
cardinality.

We will often apply the B\'ezout bound from \cite[Theorem 1]{H},
which says that $\deg(V \cap V') \le \deg(V) \deg(V')$ holds for all
algebraic sets $V,V'$. A last useful property is that for any linear
mapping $\psi: \C^n \to \C^m$, $\deg(\overline{ \psi(V)}) \le \deg(V)$.

\paragraph*{Noether position.}
Suppose that the ambient dimension $n$ is fixed.  For $i$ in
$\{1,\dots,n\},$ let $\pi_i$ denote the projection
\begin{align*}
\C^n  &\rightarrow \C^i \\
(x_1,\hdots,x_n) &\mapsto  (x_1,\hdots,x_i).    
\end{align*} 
A $d$-equidimensional algebraic set $V \subset \C^n$ is in
\textit{Noether position} for the projection $\pi_d$ when the
extension \[\C[X_1,\hdots,X_{d}] \rightarrow \C[X_1,\hdots,X_n]/I(V)\]
is injective and integral; here, $I(V) \subset \C[X_1,\hdots,X_n]$ is
the defining ideal of $V$. It is then a consequence that for any $\xb$ in
$\C^d,$ the fiber $V \cap \pi_d^{-1}(\xb)$ has dimension zero and is
thus finite and not empty.

\paragraph*{Gradient vectors and Jacobian matrices.} The gradient vector 
of a polynomial $f \in \C[X_1,\dots,X_n]$ is written $\grad(f) \in
\C[X_1,\dots,X_n]^{1 \times n}$ (so this is a row vector). Most of the
time, the variables with respect to which we differentiate are clear
from the context, but we may write $\grad_{\bm X}(f)$ for clarity,
with $\bm X=X_1,\dots,X_n$.

The Jacobian matrix of polynomials $F=f_1,\dots,f_s$ is the $s \times
n$ matrix $\jac(F)$, with $\partial f_i/\partial X_j$ at entry
$(i,j)$, for $1\le i \le s$ and $1 \le j \le n$. As we do for
gradients, we will write $\jac_{\bm X}(F)$ if we want to highlight
what variables we differentiate with respect to.

Given $\xb$ in $\C^n$, we then write $\grad(f,\xb)$, resp.\ $\jac(F,
\xb)$, for the evaluation of respectively $\grad(f)$ and $\jac(F)$ 
at $\xb$.

\paragraph*{Tangent spaces, regular and singular points.}
Assume that $V \subset \C^n$ is a $d$-equidimen\-sional algebraic
set. The \textit{Zariski-tangent space} to $V$ at $\xb \in V$ is the
vector space $T_{\xb}V \subset \C^n$ defined by the equations
\[
\grad (g,\xb) \cdot \bm v = 0 \text{~for all $g \in I(V)$}, \quad \bm
v \in \C^{n \times 1}.
\] 
Then, the point $\xb \in V$ is a \textit{regular point} (or
non-singular) if $\dim (T_{\xb}V) = d$; otherwise, $\xb$ is a
\textit{singular point}. We let $\reg(V)$ and $\sing(V)$ respectively
denote the regular and singular points of $V$; when the latter is
empty, we say that $V$ is \textit{smooth}. If $I(V)$ is generated by
polynomials $G=(g_1,\hdots,g_s) \in \C[X_1,\hdots,X_n]^s$, then at any
point $\xb$ of $\reg(V)$, the Jacobian matrix $\jac(G,\xb)$ has rank
$n - d$ and the right kernel of $\jac(G,\xb)$ is $T_{\xb}V.$

\paragraph*{Changes of variables.}
For a matrix $\mA$ in $\C^{n\times n}$ and a polynomial $g$ in
$\C[X_1,\hdots,X_n],$ we write \[g^\mA:=g(\mA \Xb) \in
\C[X_1,\dots,X_n],\] where $\Xb$ is the column vector with entries
$X_1,\dots,X_n$. Similarly, for a sequence of polynomials
$G=(g_1,\hdots,g_s)$ in $\C[X_1,\hdots,X_n]^s$, we write $G^{\mA} =
\left(g_1^{\mA},\hdots,g_s^{\mA}\right).$ For an algebraic set $V
\subset \C^n$ and a matrix $\mA \in \GL(n),$ we define $V^{\mA}$ as
the image of $V$ by the map $\phi_{\mA} : \xb \mapsto \mA^{-1}\xb.$
 Notice in particular that
$V(G^{\mA}) = \phi_{\mA}(V(G)) = V(G)^{\mA}. $

\paragraph*{Locally closed sets.}
We will also need to work with {\em locally closed} sets: we say that
$Y \subset \C^n$ is locally closed if we can write it as $Y=V-V'$, for
some algebraic sets $V,V'$.

The notions of dimension and equidimensionality carry over to this
context (they are defined through the Zariski closure of $Y$), as does
that of tangent space: for $\xb$ in $Y$, we set $T_\xb Y = T_\xb V$
(this is independent of the choice of $V,V'$ in the definition above).
If $Y$ is equidimensional, as we did for algebraic sets, we can then
define the {\em regular points} (or non-singular points) of $Y$ as
those points at which the tangent space has dimension $d$, and we say
that $Y$ is smooth if all its points are regular.

Open sets are locally closed. As another example, for any
$d$-equidimensional algebraic set $V$, $\reg(V)$ is a smooth
$d$-equidimensional locally closed set.


\section{Describing determinantal varieties}\label{ssec:detvar}

In this section, we work with polynomials in $\C[Y_1,\dots,Y_N]$, for
some positive integer $N$. Given a matrix $\bm A$ in
$\C[Y_1,\dots,Y_N]^{q \times r}$, with $q \le r$, together with some
equations $B=(b_1,\dots,b_s)$ in $\C[Y_1,\dots,Y_N]$, we consider the
locus $\X$ defined as 
\[\X = \{\yb \in \C^N \ \mid \ b_1(\yb) = \cdots = b_s(\yb) =0 
     \text{~and~} {\rm rank}(\bm A(\yb)) < q\}.\] One of our goals
     here is to give a degree bound for $\X$; this will be used twice,
     in the next section for our discussion of the weak transversality
     lemma (in a slightly more general context where we work in an
     open subset of $\C^N$), then also to control the degrees of the
     systems of equations we will solve.

Consider the polynomials
\[\frkJ(\bm A, B) = (b_1,\dots,b_s,M_1,\dots,M_P),\]
where $M_1,\dots,M_P$ are the $q$-minors of $\bm A$, with $P={r \choose
q}$. Since we have $V(\frkJ(\bm A, B)) = \X$, we may derive a degree
bound on $\X$ using the B\'ezout inequality. However, even the refined
form given in \cite[Proposition 2.3]{Heintz1980} involves an
exponential dependency in either the ambient dimension $N$ or the
number of minors $P$. This might be acceptable in some contexts (such
as when estimating the degrees of polar varieties), but is way beyond
our target bound in the context of weak transversality, for instance.

Instead, we use Lagrange systems. We let $L_1,\hdots,L_q$ be new
variables, thought of as Lagrange multipliers, and consider the
``Lagrange polynomials'' given as the $r$ entries of $ [ L_1 ~\cdots~
  L_q]\cdot \bm A$. We denote by $Z \subset \C^{N+q}$ the algebraic set
defined by the vanishing of
\[ ( b_1,\dots,b_s, \  [ L_1 ~\cdots~ L_q]\cdot \bm A )\]
and by $Z'$ the algebraic set
\[
Z' := \overline{Z - \{(\yb,0,\dots,0) \in \C^{N+q}~|~(\yb,0,\dots,0) \in Z\}},
\]
where the bar denotes Zariski closure (we have to remove such points,
since $L_1=\cdots=L_q=0$ is always a trivial solution to the Lagrange
equations). Finally, consider the projection
\begin{align*} 
  \mu :~ \C^{N+q} &\rightarrow \C^{N}\\
  (\yb,\bm \ell)~ &\mapsto \yb.
\end{align*}
It is then possible to prove that $\X$ is the Zariski closure of
${\mu(Z')}$, and derive degree bounds using the equations defining
$Z$. However, while introducing $Z'$ is convenient, computing defining
equations for it is non-trivial, as it involves saturation; besides,
in several contexts, it will be advantageous to work with equations in
complete intersection, which the following construction will guarantee  in
certain cases. For $\bm u = (u_1,\dots,u_q) \in \C^q$, consider the
equations
\[\frkL(\bm A, B,\bm u) = ( b_1,\dots,b_s, \  [ L_1 ~\cdots~ L_q]\cdot \bm A,\ u_1 L_1 + \cdots + u_q L_q -1 ),\]
and let $Z_{\bm u} \subset \C^{N+q}$ be its zero-set. Using the linear
equation $u_1 L_1 + \cdots + u_q L_q -1$ allows us to discard
solutions where $L_1 = \cdots =L_q = 0$, but unlucky choices of $\bm
u$ may discard other components as well. The following proposition
makes this more precise.

\begin{prop}\label{prop:projection}
  For any $\bm u$ in $\C^q$, we have the inclusion $\mu(Z_{\bm u})
  \subset \X$. There exists a non-empty open set $\mathscr{O} \subset
  \C^q$ such that for $\bm u$ in $\mathscr{O}$, we have the inclusion
  $\X \subset \overline{\mu(Z_{\bm u})}$, and thus the equalities $\X =
  \overline{\mu(Z_{\bm u})}$ and 
  \[\sqrt{\langle \frkL(\bm A, B,\bm u)\rangle\cap \C[Y_1,\dots,Y_N]} = 
  \sqrt{\langle \frkJ(\bm A, B) \rangle}.\]
  The set $\mathscr{O}$ is the complement of at most $\deg(\X)$ 
  hyperplanes.
\end{prop}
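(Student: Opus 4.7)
The first inclusion, $\mu(Z_{\bm u}) \subset \X$, holds for every $\bm u$ and is immediate: at any $(\yb, \bm \ell) \in Z_{\bm u}$, the linear constraint $u_1 \ell_1 + \cdots + u_q \ell_q = 1$ forces $\bm \ell \neq \bm 0$, so $\bm \ell \bm A(\yb) = 0$ exhibits a nontrivial left kernel and, combined with $b_1(\yb) = \cdots = b_s(\yb) = 0$, yields $\yb \in \X$.

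For the reverse inclusion, I would work with $Z'$, exploiting the fact that $Z$ is defined by equations that are either linear in $\bm L = (L_1,\ldots,L_q)$ or independent of it, so $Z$, and hence $Z'$ and each of its irreducible components $Z'_1,\ldots,Z'_t$, is stable under the $\C^*$-action scaling $\bm L$. I would first verify $\mu(Z') = \X$: any $(\yb,\bm \ell) \in Z \setminus V(L_1,\ldots,L_q)$ has $\bm \ell \neq \bm 0$ in the left kernel of $\bm A(\yb)$, so $\yb \in \X$; conversely, each $\yb \in \X$ admits such an $\bm \ell$, and $(\yb,\bm \ell) \in Z \setminus V(L_1,\ldots,L_q) \subseteq Z'$. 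Writing $\X = \X_1 \cup \cdots \cup \X_r$ for the irreducible decomposition, each $\overline{\mu(Z'_k)}$ is irreducible and contained in $\X$; irreducibility of $\X_j$ combined with $\X_j \subset \bigcup_k \overline{\mu(Z'_k)}$ then selects an index $k(j)$ with $\overline{\mu(Z'_{k(j)})} = \X_j$. Next, for each $j$ define the linear subspace
\[
\mathcal{B}_j := \{\bm u \in \C^q \mid u_1 L_1 + \cdots + u_q L_q \text{ vanishes identically on } Z'_{k(j)}\},
\]
which is proper because $Z'_{k(j)}$ contains a point with $\bm L \neq \bm 0$. Let $\mathscr{O} := \C^q \setminus (\mathcal{B}_1 \cup \cdots \cup \mathcal{B}_r)$. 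For $\bm u \in \mathscr{O}$ and any $j$, the hypersurface $V(u_1 L_1 + \cdots + u_q L_q)$ does not contain $Z'_{k(j)}$, so its complement is open and dense in the irreducible $Z'_{k(j)}$, and any point $(\yb,\bm \ell)$ of that complement can be rescaled by $1/(u_1 \ell_1 + \cdots + u_q \ell_q)$ to a point of $Z'_{k(j)} \cap V(u_1 L_1 + \cdots + u_q L_q - 1) \subseteq Z_{\bm u}$ with the same $\yb$-coordinate, using the $\C^*$-stability of $Z'_{k(j)}$. Hence $\mu(Z_{\bm u})$ contains a dense subset of $\mu(Z'_{k(j)})$, so $\overline{\mu(Z_{\bm u})} \supseteq \X_j$ for every $j$, giving $\overline{\mu(Z_{\bm u})} = \X$.

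To bound the number of hyperplanes, I enclose each proper linear subspace $\mathcal{B}_j$ in a hyperplane of $\C^q$; this yields at most $r$ hyperplanes, and $r \leq \deg(\X)$ since $\deg(\X)$ is the sum of the degrees of its irreducible components, each of which is at least one. The radical ideal equality then follows from the Nullstellensatz applied to the geometric identity $\overline{\mu(Z_{\bm u})} = V(\frkJ(\bm A, B)) = \X$, using that the elimination ideal $\langle \frkL(\bm A, B, \bm u)\rangle \cap \C[Y_1,\ldots,Y_N]$ defines $\overline{\mu(Z_{\bm u})}$. The step I expect to require the most care is the density transfer from the open subset $Z'_{k(j)} \setminus V(u_1 L_1 + \cdots + u_q L_q)$ to the hyperplane section $Z'_{k(j)} \cap V(u_1 L_1 + \cdots + u_q L_q - 1)$: a hyperplane section of an irreducible variety need not project onto the image of the ambient variety, and the argument relies crucially on the $\C^*$-invariance of each component of $Z'$ to rescue density.
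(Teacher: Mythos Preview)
Your proof is correct and reaches the same conclusion, but it is organized differently from the paper's argument.

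The paper works entirely ``downstairs'' on $\X$. For each irreducible component $\X_k$ of $\X$, it observes that $\bm A$ has rank $<q$ over the function field $\C(\X_k)$, so there is a nonzero rational left-null vector $\bm\ell_k \in \C(\X_k)^q$. Choosing a point $\bm y_k$ where $\bm\ell_k$ is defined and nonzero, the paper sets $\mathscr{O}_k=\{\bm u:\bm\ell_k(\bm y_k)\cdot\bm u\neq 0\}$, which is already the complement of a hyperplane, and $\mathscr{O}=\bigcap_k\mathscr{O}_k$. For $\bm u\in\mathscr{O}$, the open set $\X''_k\subset\X_k$ where $\bm\ell_k(\cdot)\cdot\bm u\neq 0$ is nonempty and dense; on it one rescales $\bm\ell_k(\bm y)$ to hit $Z_{\bm u}$, giving $\X''_k\subset\mu(Z_{\bm u})$ and hence $\X_k\subset\overline{\mu(Z_{\bm u})}$.

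You instead work ``upstairs'' on $Z'$: you decompose $Z'$, match its components to those of $\X$, and use the $\C^*$-action on the $L$-coordinates (which you correctly identify as the crucial ingredient) to rescale points of $Z'_{k(j)}\setminus V(\bm u\cdot\bm L)$ into $Z_{\bm u}$. Your bad loci $\mathcal{B}_j$ are linear subspaces that may have codimension $>1$, so you need the extra step of enclosing each in a hyperplane to match the statement; the paper's hyperplanes arise directly from the single chosen point $\bm y_k$. Conversely, your approach avoids reasoning with rational functions over $\C(\X_k)$ and makes the role of the homogeneity in $\bm L$ more explicit. Both routes hinge on the same rescaling idea and yield the same bound of $\deg(\X)$ hyperplanes.
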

\begin{proof}
  If $(\bm y, \bm \ell)$ cancels all polynomials in ${\frkL(\bm A,
    B,\bm u)}$, then $\bm \ell$ is non-zero, so that $\bm A(\bm y)$ is
  rank-deficient. As a consequence, $\bm y$ is in $\X$. This proves
  the first assertion.

  For the second one, let $\X_1,\dots,\X_K$ be the irreducible
  components of $\X$.  For any given $k$ in $\{1,\dots,K\}$, since all
  $q$-minors of $\mA$ vanish on $\X_k$, they vanish in the function
  field $\C(\X_k)$, so $\mA$ has rank less than $q$ as a matrix over
  $\C(\X_k)$. Thus, there exists a non-zero vector of rational
  functions
  \[\bm \ell_k = (\ell_{k,1},\hdots,\ell_{k,q})=\left(\frac{N_{k,1}}{D_k},\hdots,\frac{N_{k,q}}{D_k}\right)\in \C(\X_k)^{ q},\]
  such that $\bm \ell_k  \cdot \bm A = 0$ in $\C(\X_k)^r$ 
(here, we see $\bm \ell_k$ in $\C(\X_k)^{1 \times q}$).
  For definiteness, assume that $N_{k,\iota_k} \ne 0.$ Then, in particular,
  $\X'_k = \X_k - V( D_k N_{k,\iota_k})$ is dense in $\X_k$; for 
  $\bm y$ in $\X'_k$, $\bm\ell_k(\bm y)$ is well-defined, non-zero, and still
  satisfies $\bm \ell_k(\bm y) \cdot \bm A(\bm y) = 0$.
  
  Then, pick a point $\bm y_k$ in $\X'_k$, so that
  $\bm \ell_k(\bm y_k)$ is a well-defined, non-zero vector in
  $\C^q$. This allows us to define a non-empty Zariski open set
  $\mathscr{O}_k \subset \C^q$ by the condition
  \[
  \mathscr{O}_k := 
  \left\{\bm u \in \C^q~|~ \bm \ell_k(\bm y_k) \cdot \bm u \ne 0\right \},
  \]
  where in the dot product we take $\bm \ell_k(\bm y_k)$ in $\C^{1 \times q}$ and $\bm u$ in
  $\C^{q \times 1}$.  Finally, we let $\mathscr{O} := \cap_{1 \le k
    \le K} \mathscr{O}_{k}$, which is open, non-empty, and defined 
  as the complement of $K \le \deg(\X)$ hyperplanes, as claimed. We now prove
  that for $\bm u$ in $\mathscr{O}$, the inclusion $\X \subset
  \overline{\mu(Z_{\bm u})}$ holds.

  For this, we take $k$ as above, 
  and we prove that $\X_k$ is contained in $\overline{\mu(Z_{\bm u})}$.
  Consider the rational mapping 
  \begin{align*}
    \X'_k  &\rightarrow \C\\    
    \bm y &\mapsto  \bm \ell_k(\bm y)\cdot \bm u = \frac{ u_1 N_{k,1}(\bm y) + \cdots + u_q N_{k,q}(\bm y)}{D_k(\bm y)}.    
  \end{align*}
  Put $\X''_k = \X'_k - V(u_1 N_{k,1} + \cdots + u_q N_{k,q})$; this is
  again an open subset of $\X_k$, and the fact that $ \bm
  \ell_k(\bm y_k)\cdot \bm u$ is non-zero, with $\bm y_k$ in $\X'_k$, shows that
  $\X''_k$ is not empty. In particular, it is dense in $\X_k$.   

  Take $\bm y$ in $\X''_k$. Then, $\alpha:=\bm \ell_k(\bm y)\cdot \bm u$
  is non-zero, set we can define $\bm \ell' := 1/\alpha\ \bm\ell_k(\bm y)$.
  Then, $\bm \ell'$ is still in the left nullspace of $\bm A(\bm y)$,
  and by construction $ \bm \ell'\cdot \bm u =1$, so that $(\bm y, \bm
  \ell')$ is in $Z_{\bm u}$. In other words, $\X''_k$ is contained in
  $\mu(Z_{\bm u})$. Taking the Zariski closure, we obtain that $\X_k$ 
  is contained in $\overline{\mu(Z_{\bm u})}$, as claimed. 
  The equality $\X = \overline{\mu(Z_{\bm u})}$ follows, as does the
  claimed equality between ideals.
\end{proof}

\begin{corollary} \label{coro:degree}
  If all polynomials $b_1,\dots,b_s$ have respective degrees at most
  $d_1,\dots,d_s$, and all entries of $\bm A$ have degree at most
  $d'$, then the degree of $\X$ is at most $d_1 \cdots d_s (d'+1)^r.$
\end{corollary}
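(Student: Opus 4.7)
The plan is to apply Proposition~\ref{prop:projection} and then invoke the B\'ezout inequality on the system $\frkL(\bm A, B,\bm u)$ defining $Z_{\bm u}$, together with the fact that a linear projection cannot increase the degree of a variety. By Proposition~\ref{prop:projection}, we may pick $\bm u$ in the non-empty Zariski open set $\mathscr{O} \subset \C^q$, so that $\X = \overline{\mu(Z_{\bm u})}$. Since $\mu:\C^{N+q} \to \C^N$ is linear, the property recalled in the Preliminaries ($\deg(\overline{\psi(V)}) \le \deg(V)$ for any linear map $\psi$) yields $\deg(\X) \le \deg(Z_{\bm u})$, so it is enough to bound $\deg(Z_{\bm u})$.

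Next, I would bound $\deg(Z_{\bm u})$ by applying the B\'ezout inequality $\deg(V \cap V') \le \deg(V)\deg(V')$ iteratively to the hypersurfaces cut out by the defining equations of $Z_{\bm u}$. The equation list $\frkL(\bm A, B,\bm u)$ consists of:
\begin{itemize}
\item the polynomials $b_1,\dots,b_s$, of respective degrees at most $d_1,\dots,d_s$;
\item the $r$ entries of $[L_1~\cdots~L_q]\cdot \bm A$, each of which is linear in $L_1,\dots,L_q$ with coefficients of degree at most $d'$ in $Y_1,\dots,Y_N$, hence of total degree at most $d'+1$;
\item the linear form $u_1 L_1 + \cdots + u_q L_q - 1$ of degree $1$.
\end{itemize}
Intersecting the corresponding hypersurfaces one at a time gives
\[
\deg(Z_{\bm u}) \le d_1 \cdots d_s \cdot (d'+1)^r \cdot 1,
\]
which combined with $\deg(\X) \le \deg(Z_{\bm u})$ yields the claimed bound.

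There is no real obstacle here: the hard work has already been done in Proposition~\ref{prop:projection}, which shows that the Lagrange-multiplier system gives a variety whose image under $\mu$ has Zariski closure equal to $\X$. The only minor point to check carefully is the degree count for the Lagrange entries, where one must remember that the multipliers $L_1,\dots,L_q$ contribute an extra $1$ to the degree, producing the factor $(d'+1)^r$ rather than $(d')^r$.
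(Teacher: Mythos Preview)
Your proposal is correct and follows essentially the same argument as the paper: choose $\bm u \in \mathscr{O}$ via Proposition~\ref{prop:projection}, bound $\deg(Z_{\bm u})$ by B\'ezout using the degrees $d_1,\dots,d_s$, $(d'+1)$ for each of the $r$ Lagrange equations, and $1$ for the affine form, then use that projection does not increase degree. There is nothing to add.
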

\begin{proof}
  Choose $\bm u$ in the set $\mathscr{O}$ of the previous lemma.  The
  algebraic set $Z_{\bm u}$ is defined by $s$ equations of respective
  degrees at most $d_1,\dots,d_s$, $r$ equations of degree at most
  $d'+1$ and a linear equation. It follows from B\'ezout's
  Theorem~\cite{H} that $\deg(Z_{\bm u}) \leq d_1 \cdots d_s
  (d'+1)^r$. Degree does not increase through projection, so the
  conclusion follows from the previous lemma.
\end{proof}

\begin{remark}\label{rk:degree}
  In the next section, we will consider the following slight variant
  of the problem considered here, where we are interested in the
  locally closed set
  \[\X' = \{\yb \in \Omega \ \mid \ b_1(\yb) = \cdots = b_s(\yb) =0 
  \text{~and~} {\rm rank}(\bm A(\yb)) < q\},\] for some Zariski open
  set $\Omega \subset \C^N$.  The Zariski closure $\overline{\X'}$ is
  the union of certain irreducible components of the set $\X$ defined
  above, so the degree bound of Corollary~\ref{coro:degree} still
  holds for $\overline{\X'}$.
\end{remark}

We note that in some cases, sharper bounds are known for the degrees
of determinantal varieties: for instance, when $X$ is finite, and
defined as the set of critical points on a smooth algebraic
set~\cite{SaSp16}, or when we want to take into account differences in
the degrees of the rows and columns of $\bm
A$~\cite{Spa14,NieRan09,HaSaScVu18}.


\section{Weak transversality}\label{sec:wt}

Several of the generic properties of polar varieties are consequences
of {\em weak transversality}, which is an important extension of
Sard's lemma due to Thom (this observation goes back to work of
Giusti, Heintz and collaborators~\cite{BaGiHeMb97,BaGiHeLePa12}).  In
this section, we develop a quantitative extension of Thom's weak
transversality theorem, specialized to the particular case of
transversality to a point. In the sequel, we will apply this result to
bound the degree of particular hypersurfaces our algorithm needs to
avoid to guarantee success.


\subsection{Definitions and statement of the result}

In its differential version, Sard's lemma states that the set of
critical values of a smooth function $\R^n \to \R^m$ has measure zero;
extensions exist to smooth mapping between differential manifolds.  In
our algebraic context, we will use the following definitions.

Consider a polynomial mapping $\Phi : Y \rightarrow \C^m$ from a
smooth $n$-equidimensional locally closed set $Y$ to $\C^m$, with
$m\le n$. A {\em critical point} of $\Phi$ is a point $\bm y \in Y$
for which the image of the tangent space $T_{\bm y} Y$ by the Jacobian
matrix $\jac(\Phi,\bm y)$ has dimension less than $m$. For instance,
the case that will interest us in this section is when $Y$ is Zariski
open in $\C^n$, in which case we have $T_{\bm y} Y=\C^n$ for all $\bm
y$ in $Y$, and the condition is equivalent to the Jacobian of $\Phi$
having rank less than $m$ at $\bm y$. {\em Critical values} are the
images by $\Phi$ of critical points; the complement of this set are
the {\em regular values}. Notice then, a regular value is not
necessarily in the image of~$\Phi$.

One can then give ``algebraic'' versions of Sard's lemma: for
instance,~\cite[(3.7)]{Mumford76} shows that for $Y$ an irreducible
algebraic set and $\Phi$ dominant, the critical values of $\Phi$ are
contained in a strict algebraic subset of $\C^m$; below, we will rely
on a straightforward generalization given in~\cite{TWT}. See
also~\cite[Chapter~9]{bochnak1998real} for the semi-algebraic case.

Thom's weak transversality lemma, as given for instance
in~\cite{demazure2000bifurcations}, generalizes Sard's lemma. In this
section, we consider a particular case of this result (transversality
to a point), and establish a quantitative version of it.

Let $n,\dt,$ and $m$ be positive integers, with $m \le n$ as before,
let $\mathscr{O}$ be a Zariski open subset of $\C^n$, and denote by
$\Phi: \mathscr{O} \times \C^{\dt} ~ \rightarrow \C^{m}$ a mapping
given by polynomials in $n+\dt$ indeterminates
$X_1,\dots,X_n,\Theta_1,\dots,\Theta_\dt$ (the latter should be
thought of as parameters). For $\thetab$ in $\C^{\dt}$, we let
$\Phi_{\thetab} : \mathscr{O} \rightarrow \C^{m}$ be the induced
mapping $\xb\mapsto\Phi(\xb,\thetab)$.  Thom's weak transversality
lemma tells us that if $0$ is a regular value of the mapping $\Phi$,
then $0$ remains a regular value of the induced mapping $\Phi_{\bm
  \vt}$ for a generic $\bm \vt$. (Here, we are dealing with the
particular case of transversality to a point, which can be rephrased
entirely in terms of regular and critical values.)  Our quantitative
version of this result is the following.

\begin{prop} [Weak transversality]\label{prop:weak_t}
  Let all notation be as before, and suppose that $\Phi$ is defined by
  $m$ polynomials of degree at most $d$. If $0$ is a regular value of
  $\Phi$, there exists a non-zero polynomial $\Gamma \in
  \C[\Theta_1,\dots,\Theta_s]$ of degree at most $d^{m+n}$ such that
  for $\thetab$ in $\C^\dt$, if $\Gamma(\thetab)\ne 0$, then $0$ is a
  regular value of~$\Phi_{\thetab}$.
\end{prop}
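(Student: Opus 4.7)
The plan is to introduce the incidence variety
\[
W=\{(\xb,\thetab)\in\mathscr{O}\times\C^{\dt} \mid \Phi(\xb,\thetab)=0\}
\]
and study the projection $\pi:W\to\C^{\dt}$, $(\xb,\thetab)\mapsto\thetab$. Since $0$ is a regular value of $\Phi$, the full Jacobian $\jac\Phi(\xb,\thetab)\in\C^{m\times(n+\dt)}$ has rank $m$ at every point of $W$, so $W$ is a smooth locally closed set of pure dimension $n+\dt-m$ (if $W$ is empty, the claim is trivial with $\Gamma=1$).

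Next I would carry out a short tangent space computation to translate ``$\thetab$ is a regular value of $\pi|_W$'' into ``$0$ is a regular value of $\Phi_{\thetab}$''. Writing $T_{(\xb,\thetab)}W=\ker\jac\Phi(\xb,\thetab)$ and decomposing the Jacobian into its $\xb$- and $\thetab$-blocks, the image of $d\pi$ restricted to $T_{(\xb,\thetab)}W$ is all of $\C^{\dt}$ if and only if $\mathrm{Im}(\jac_{\thetab}\Phi(\xb,\thetab))\subset\mathrm{Im}(\jac_{\xb}\Phi(\xb,\thetab))$. Combined with the full-rank hypothesis on $\jac\Phi$, this is equivalent to $\jac_{\xb}\Phi(\xb,\thetab)$ itself having rank $m$. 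Hence the critical locus of $\pi|_W$ is exactly
\[
C=\{(\xb,\thetab)\in\mathscr{O}\times\C^{\dt} \mid \Phi(\xb,\thetab)=0,\ \rank \jac_{\xb}\Phi(\xb,\thetab)<m\}.
\]

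This is precisely the determinantal situation handled in Section~\ref{ssec:detvar}: the components of $\Phi$ provide $m$ equations of degree at most $d$, and $\jac_{\xb}\Phi$ is an $m\times n$ matrix whose entries have degree at most $d-1$. Working inside the open set $\mathscr{O}\times\C^{\dt}$, Corollary~\ref{coro:degree} and Remark~\ref{rk:degree} give $\deg\overline{C}\le d^{m}(d-1+1)^{n}=d^{m+n}$. Setting $V:=\overline{\pi(C)}$, the fact that linear projections do not increase degree yields $\deg V\le\deg\overline{C}\le d^{m+n}$. Applied to each irreducible component of $W$, the algebraic Sard-type statement recalled from~\cite{TWT} guarantees that $V$ is a strict subset of $\C^{\dt}$.

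To conclude I would manufacture a single nonzero polynomial $\Gamma$ of the required degree vanishing on $V$. Decomposing $V=V_1\cup\cdots\cup V_r$ into irreducible components, each $V_j\subsetneq\C^{\dt}$ has some dimension $d_j<\dt$; a generic linear projection $\psi_j:\C^{\dt}\to\C^{d_j+1}$ sends $V_j$ to a set whose Zariski closure is a hypersurface of degree at most $\deg V_j$, and pulling back its defining equation by $\psi_j$ produces a nonzero $\Gamma_j$ of total degree at most $\deg V_j$ vanishing on $V_j$. Then $\Gamma:=\prod_{j}\Gamma_j$ is nonzero, vanishes on $V$, and has degree at most $\sum_j\deg V_j=\deg V\le d^{m+n}$, as required. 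The main subtlety lies in the tangent-space equivalence of the second paragraph: the regular-value hypothesis on $\Phi$ is needed exactly so that the condition on $d\pi|_{T_{(\xb,\thetab)}W}$ reduces cleanly to the rank condition on $\jac_{\xb}\Phi$, and obtaining a single-equation $\Gamma$ of total degree no larger than $\deg V$ requires the generic projection step in the final paragraph rather than a naive use of B\'ezout.
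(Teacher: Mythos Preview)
Your proposal is correct and follows essentially the same route as the paper: define the incidence variety (the paper calls it $Y$), identify the critical locus of the projection to $\C^s$ with the determinantal set where $\jac_{\bm X}\Phi$ drops rank (the paper's Lemma~\ref{prop:rankJ}, argued via a block-matrix rank computation equivalent to your tangent-space argument), bound its degree by Corollary~\ref{coro:degree} and Remark~\ref{rk:degree}, and invoke the algebraic Sard lemma from~\cite{TWT} to see the image of the critical locus is a strict subvariety. The only difference is that you spell out the construction of $\Gamma$ from the degree bound on $V=\overline{\pi(C)}$ via generic linear projections of each irreducible component to a hypersurface, whereas the paper simply asserts that a strict subvariety of degree at most $d^{m+n}$ lies in a hypersurface of that degree; your version makes this step explicit but is otherwise the same argument.
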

\begin{ex}
  Consider a squarefree polynomial $f$ in $\C[X_1,X_2]$, with degree
  at most $d$, defining a smooth curve $V(f)$ in $\C^2$, and let the
  mapping $\Phi:\C^2\times \C \to \C^2$ be defined by
  $\Phi(X_1,X_2,\Theta) = (f(X_1,X_2), X_1-\Theta)$ (so $m=n=2$ and
  $s=1$). One checks that the Jacobian of $\Phi$ with respect to
  $(X_1,X_2,\Theta)$ has full rank two at any point in $\Phi^{-1}(0)$,
  so that $0$ is a regular value of $ \Phi$ and therefore the
  assumptions of the proposition apply.

  We then deduce that a non-zero polynomial $\Gamma \in \C[\Theta]$
  exists, with degree at most $d^{4}$ with the property that, if
  $\vartheta$ in $\C$ does not cancel $\Gamma$ then $0$ is a regular
  value of the induced mapping $ \Phi_{\vartheta}$. In particular, for
  all $\vartheta$ in $\C$ except at most $d^4$ values, the ideal
  $(f(X_1,X_2), X_1-\vartheta)$ is radical in $\C[X_1,X_2]$;
  equivalently, $f(\vartheta, X_2)$ is squarefree.
\end{ex}
In this example, we could of course obtain the same result (with a
sharper degree bound) by considering the discriminant of $f$ with
respect to $X_2$, but the construction above will be useful later on,
in a generalized form. (In this example, the bound $d^4$ could be
sharpened by utilizing the fact that only one of the polynomials
defining $\Phi$ has degree $d$, whereas the other one is linear.)

The rest of the section is devoted to the proof of the proposition.
The proof of \cite[Theorem B.3]{TWT} already shows the existence of
$\Gamma$; it is essentially the classical proof for smooth
mappings~\cite[Section~3.7]{demazure2000bifurcations}, written in an
algebraic context. In what follows, we revisit this proof,
establishing a bound on the degree of $\Gamma$.


\subsection{Proof of the proposition}

In what follows, we use the notation of Proposition~\ref{prop:weak_t},
so that we consider $m$ polynomials $\Phi$ that depend on variables
$X_1,\dots,X_n$ and $\Theta_1,\dots,\Theta_s$, with $m \le n$, and an
open set $\sO \subset \C^n$.

In the context of Thom's weak transversality, the ``bad'' parameter
values show up as the critical values of a certain projection. 
Put $Y = \Phi^{-1}(0) \cap (\sO \times \C^s)$, and let $V$ be the
Zariski closure of $Y.$ If $Y$ is empty, there is nothing to do, since
all values $\thetab$ in $\C^\dt$ satisfy the conclusion of the
proposition. We therefore assume that $Y$ is not empty. Take $(\xb,
\thetab)$ in $Y$; then by assumption, $\jac({\Phi},(\xb,\thetab))$ has
full rank $m$. Since in a neighbourhood of $(\xb,\thetab)$, $V$
coincides with $Y={\Phi}^{-1}(0) \cap (\sO \times \C^s)$, the Jacobian
criterion~\cite[Corollary 16.20]{ECA} implies that there is a unique
irreducible component $V_{(\xb,\thetab)}$ of $V$ that contains
$(\xb,\thetab),$ that $(\xb,\thetab)$ is regular on this component and
that $\dim V_{(\xb,\thetab)}=n+s-m$. This implies that $Y$ is a
smooth, $(n+s-m)$-equidimensional locally closed set.  Now, consider
the projection
\begin{align*}
  \pi: \C^{n+\dt} &\rightarrow\C^{\dt} \\
  (\xb, \thetab)&\mapsto\thetab, 
\end{align*}
and let $Z$ be the set of critical points of  the restriction $\pi_{|Y}$ of $\pi$ to $Y$; that is, \[Z
:= \{(\xb,\bm \thetab) \in Y~|~\dim (\pi(T_{\xb,\bm \thetab}Y))<s\}.\]
The projection $\pi(Z) \subset \C^s$ is thus the set of critical values
of $\pi_{|Y}$.

\begin{lemma}
  The Zariski closure $\overline{\pi(Z^{})}$ is a strict subset of
  $\C^s$.
\end{lemma}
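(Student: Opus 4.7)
The plan is to reduce the statement to the algebraic Sard-type result from \cite[(3.7)]{Mumford76} (or its generalization in \cite{TWT}), applied component by component to the Zariski closure of~$Y$. The preceding discussion already establishes the essential prerequisite: $Y$ is a smooth, $(n+s-m)$-equidimensional locally closed set. Because $m\le n$, the dimension of $Y$ is at least $s$, and this is what will eventually allow a dominant-versus-non-dominant dichotomy for each irreducible component.

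First I would write the Zariski closure of $Y$ as a finite irredundant union $V=V_1\cup\cdots\cup V_r$ of irreducible components, all of dimension $n+s-m$, and observe that since $Y$ is smooth, each intersection $Y_k:=Y\cap V_k$ is open in $V_k$ and its tangent space at a point equals the tangent space to $V_k$ there. Consequently, $(\xb,\thetab)\in Y_k$ lies in $Z$ if and only if it is a critical point of the restriction $\pi_{|Y_k}$. Since $Z=\bigcup_k (Z\cap Y_k)$ and a finite union of Zariski closed strict subsets of $\C^s$ is again a strict subset, it suffices to show that $\overline{\pi(Z\cap Y_k)}\subsetneq \C^s$ for each $k$.

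Fix such a component $V_k$. I would split into two cases. If $\pi_{|V_k}$ is not dominant, then $\overline{\pi(V_k)}$ is already a strict algebraic subset of $\C^s$, and the inclusion $\overline{\pi(Z\cap Y_k)}\subset \overline{\pi(V_k)}$ gives the claim. If $\pi_{|V_k}$ is dominant, then $V_k$ is an irreducible variety and $\pi_{|V_k}\colon V_k\to \C^s$ is a dominant morphism, so the algebraic Sard lemma (in the form of \cite[(3.7)]{Mumford76}, or the version stated in \cite{TWT}) applies: the set of critical values of $\pi_{|V_k}$ is contained in a strict algebraic subset of $\C^s$. Since critical points of $\pi_{|Y_k}$ are critical points of $\pi_{|V_k}$ lying in the open subset $Y_k$, the image $\pi(Z\cap Y_k)$ is contained in that strict subset, giving the required bound.

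The main obstacle I anticipate is just a clean bookkeeping of the identification of tangent spaces: one must verify that critical points of $\pi_{|Y}$ and critical points of $\pi_{|V_k}$ agree on $Y_k$, which uses smoothness of $Y$ to guarantee that each point of $Y$ belongs to a unique irreducible component of $V$ (as already noted in the preceding paragraph of the paper). Once that identification is in place, the result reduces immediately to a finite union of applications of the algebraic Sard lemma.
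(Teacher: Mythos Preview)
Your proposal is correct and follows essentially the same approach as the paper: reduce to the algebraic Sard lemma by identifying the tangent spaces of $Y$ with those of the smooth locus of its Zariski closure $V$, so that $Z$ is contained in the critical locus of $\pi$ on that larger set. The only organizational difference is that the paper works directly with $\reg(V)$ and invokes the non-irreducible version of Sard from \cite[Proposition~B.2]{TWT} in one stroke, whereas you unpack that step by decomposing $V$ into irreducible components and handling the dominant and non-dominant cases separately; your version is slightly more explicit but not genuinely different.
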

\begin{proof}
  The discussion above implies that $\reg(V)$ is a smooth,
  $(n+s-m)$-equidimensional locally closed set containing $Y$.  Let
  then $Z^{'}$ be the critical points of $\pi_{|\reg(V)}$; by the
  algebraic form of Sard's lemma as given in
  \cite[Theorem~3.7]{Mumford76} (for irreducible $V$)
  and~\cite[Proposition~B.2]{TWT} (for general $V$), the Zariski
  closure $\overline{\pi(Z^{'})}$ is a strict closed subset of
  $\C^s$. Now, at any point $(\xb,\thetab)$ of $Y$, the tangent spaces
  $T_{(\xb,\thetab)} Y$ and $T_{(\xb,\thetab)} \reg(V)$ coincide. As a
  result, $Z$ is contained in $Z^{'}$, and the claim follows.
\end{proof}

\noindent 
We can now explain how $\thetab$ being a regular value of $\pi_{|Y}$
relates to $0$ being a regular value of $\Phi_{\thetab}$.  In what
follows, we write our indeterminates as blocks of variables, with
$\Xb=X_1,\dots,X_n$ and $\Thetab = \Theta_1,\dots,\Theta_s$. When 
not explicitly mentioned, Jacobian matrices involve derivatives
with respect to both $\bm X$ and $\bm \Theta$.

\begin{lemma}\label{prop:rankJ}
  For $(\xb,\thetab)$ in $Y$, $(\xb,\thetab)$ is in $Z$ if and only if
  $\jac_{\bm X}(\Phi,(\xb,\thetab))$ has rank less than $m$.
\end{lemma}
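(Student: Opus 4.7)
The plan is a direct rank-nullity computation on the tangent space $T_{(\xb,\thetab)} Y$, exploiting the fact that $\pi$ is linear (so its derivative is itself).

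First I would recall what we just established in the preceding discussion: since $0$ is a regular value of $\Phi$ and $(\xb,\thetab)\in Y$, the full Jacobian $\jac(\Phi,(\xb,\thetab))$ (with respect to the joint variables $\bm X,\bm \Theta$) has rank exactly $m$. Moreover, $Y$ is smooth of dimension $n+s-m$ at $(\xb,\thetab)$, and by the Jacobian criterion
\[
T_{(\xb,\thetab)} Y \;=\; \ker \jac(\Phi,(\xb,\thetab)) \;\subset\; \C^{n+s},
\]
a subspace of dimension $n+s-m$.

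Next, since $\pi$ is the linear projection onto the $\bm\Theta$-coordinates, its restriction to $T_{(\xb,\thetab)} Y$ sends a tangent vector $(\bm v,\bm w)\in\C^n\times\C^s$ to $\bm w$. The kernel of this restriction consists of tangent vectors of the form $(\bm v,0)$ satisfying $\jac(\Phi,(\xb,\thetab))\cdot(\bm v,0)^{T}=0$, which is precisely the condition $\jac_{\bm X}(\Phi,(\xb,\thetab))\cdot\bm v=0$. Hence
\[
\dim\bigl(\ker \pi|_{T_{(\xb,\thetab)} Y}\bigr) \;=\; n-\rank \jac_{\bm X}(\Phi,(\xb,\thetab)).
\]

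Applying rank-nullity to $\pi|_{T_{(\xb,\thetab)} Y}$ now gives
\[
\dim \pi(T_{(\xb,\thetab)} Y) \;=\; (n+s-m)-\bigl(n-\rank \jac_{\bm X}(\Phi,(\xb,\thetab))\bigr) \;=\; s-m+\rank \jac_{\bm X}(\Phi,(\xb,\thetab)).
\]
Thus $\dim \pi(T_{(\xb,\thetab)} Y) < s$ if and only if $\rank \jac_{\bm X}(\Phi,(\xb,\thetab)) < m$, which is the equivalence claimed.

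There is no real obstacle here: the only point that requires attention is the initial identification of $T_{(\xb,\thetab)} Y$ as the full kernel of $\jac(\Phi,(\xb,\thetab))$, which is legitimate because the hypothesis that $0$ is a regular value of $\Phi$ forces the Jacobian to have the maximal rank $m$ on all of $Y$, making $Y$ smooth and equidimensional of the expected dimension.
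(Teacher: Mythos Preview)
Your proof is correct and follows essentially the same approach as the paper: identify $T_{(\xb,\thetab)}Y$ with $\ker\jac(\Phi,(\xb,\thetab))$ using the regular-value hypothesis, then compute $\dim\pi(T_{(\xb,\thetab)}Y)$ in terms of $\rank\jac_{\bm X}(\Phi,(\xb,\thetab))$. The only cosmetic difference is that the paper packages the computation through the rank of the stacked Jacobian matrix $\bm M=\begin{bmatrix}\jac(\pi)\\ \jac(\Phi)\end{bmatrix}$ evaluated in two ways, whereas you apply rank--nullity directly to $\pi|_{T_{(\xb,\thetab)}Y}$; the underlying linear algebra is identical.
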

\begin{proof}
  Let $\bm M$ denote the $(s+m) \times (s+n)$ Jacobian matrix of $\pi$
  and $\Phi$ with respect to $X_1,\dots,X_n$ and
  $\Theta_1,\dots,\Theta_s$, that is,
  \begin{align*}
    \bm M &= 
    \bbm 
    \jac(\pi)\\
    \jac(\Phi) 
    \ebm 
    =
    \bbm 
    \textbf{0}_{\dt \times n}\hspace{5mm}\textbf{I}_{\dt} \\
    \jac(\Phi)
    \ebm.
  \end{align*}
  Take $(\xb,\thetab)$ on $Y$. Then, the rank of $\bm M(\xb,\thetab)$
  can be written as $\textup{rank}(\jac(\Phi,(\xb,\thetab))) +
  \textup{rank}([\textbf{0}_{\dt \times n}~\textbf{I}_{\dt}] \mid \ker
   \jac(\Phi,(\xb,\thetab)))$, where the latter is the rank of the
  restriction of $[\textbf{0}_{\dt \times n}~\textbf{I}_{\dt}]$ to the
  nullspace of $\jac(\Phi,(\xb,\thetab))$.

  Since $(\xb,\thetab)$ is in $Y$ and since $0$ is a regular value of
  $\Phi$, $\jac (\Phi,(\xb,\thetab))$ has full rank $m$. On the other
  hand, the nullspace of that matrix is the tangent space
  $T_{\xb,\thetab} Y$, and $\textup{rank}([\textbf{0}_{\dt \times
      n}~\textbf{I}_{\dt}] \mid \ker \jac(\Phi,(\xb,\thetab)))$ is the
  dimension of $\pi(T_{\xb,\thetab} Y)$.  In other words, the rank of
  $\bm M(\xb,\thetab)$ is equal to $m+\dim(\pi(T_{\xb,\thetab} Y))$.

  This proves that for $(\xb,\thetab)$ in $Y$, $(\xb,\thetab)$ is in
  $Z$ if and only if the matrix $\bm M$ has rank less than $\dt+m$ at
  $(\xb,\thetab)$. Now, notice that
  \begin{align*}
   \bm M(\xb,\thetab)&= 
    \bbm 
    \textbf{0}_{\dt \times n} &\textbf{I}_{\dt} \\
     \jac_{\bm X}(\Phi,(\xb,\thetab))     &\jac_{\bm \Theta}(\Phi, (\xb,\thetab))
    \ebm.
  \end{align*}
  This shows that the rank of $\bm M(\xb,\thetab)$ 
  equals $s + \textup{rank}(\jac_{\bm X}(\Phi,(\xb,\thetab)))$,
  and  the lemma follows.
\end{proof}

As a result, suppose we take $\thetab$ in $\C^\dt - {\pi(Z)}$.  Then
for all $\xb$ in $\Phi_{\thetab}^{-1}(0) \cap \sO$, $(\xb,\thetab)$ is
in $Y$, so it is not in $Z$; the previous lemma then implies that the
Jacobian matrix of $\Phi_{\thetab}$, which is $\jac_{\bm X}(\Phi,(\bm
X,\thetab))$, has full rank $m$ at $\xb$. In other words, $0$ is a
regular value of $\Phi_{\thetab}$ in the open set $\sO$. To prove
Proposition~\ref{prop:weak_t}, it is thus enough to establish the
existence of a non-zero polynomial of degree at most $d^{m+n}$ that
vanishes on $\overline{\pi(Z)}$. We already established that
$\overline{\pi(Z)}$ is a strict subset of $\C^\dt$, so the only
missing ingredient is to prove that it has degree at most $d^{m+n}$.

We start by bounding above the degree of $\overline{Z}$.
The previous lemma shows the equality
\[Z = \{ (\xb,\thetab) \in \Omega \times \C^\dt \ \mid \Phi(\xb,\thetab) =0
\text{~and~} {\rm rank}(\jac_{\bm X}(\Phi,(\xb,\thetab))) < m\}.\]
Since all polynomials in $\Phi$ have degree at most $d$, and all
entries of $\jac_{\bm X}(\Phi)$ at most $d-1$, we can apply
Corollary~\ref{coro:degree}, so as to deduce that $\deg(\overline{Z})
\le d^{m+n}$. This implies that $\overline{\pi(\overline Z)}$ has
degree at most $d^{m+n}$, and the equality $\overline{\pi(\overline
  Z)} =\overline{\pi(Z)}$ allows us to conclude the proof.


\section{Overview of the main algorithm}\label{sec:overview}

Let $F = (f_1,\hdots,f_p)$ be a sequence of polynomials in
$\C[X_1,\hdots,X_n]$. Suppose that the ideal $\langle F \rangle
\subset \C[X_1,\hdots,X_n]$ is radical and that $V(F)$ is smooth and
equidimensional of dimension $\delta= n-p$.

In this section, we give a high-level description of an algorithm
from~\cite{EMP} that computes at least one point in each connected
component of $V(F) \cap \R^n$. Correctness of this algorithm was
established in~\cite{EMP} provided we are in generic coordinates: the
algorithm solves a family of systems of equations that describe points
on the {\em polar varieties} of $V(F)$, and being in generic
coordinates ensures several desirable properties for these polar
varieties.

After a brief review of the basic properties of polar varieties, we
sketch the main algorithm and highlight what properties are needed for
its correctness (the next sections will give quantitative statements
regarding the genericity of these properties). In that, we mainly
follow~\cite{EMP}, but we also introduce requirements related to
Lagrange systems, as introduced in Section~\ref{ssec:detvar}, as they
will be of help in further sections.


\subsection{Polar varieties}

Let $F$ be as in the preamble and let $V=V(F)$. Recall that, for $i
\in \{1,\hdots,n\},$ we denote by $\pi_i$ the projection
\begin{align*}
  \C^n &\rightarrow \C^i \\
  (x_1,\hdots,x_n) &\mapsto  (x_1,\hdots,x_i).    
\end{align*} 
For $i \le \delta$, the $i$-th \textit{polar variety} $W(i,F)$  is
the set of critical points of the restriction of $\pi_i$ to $V$, that
is,
\[W(i,F) := \left\{\xb \in V~|~\dim \pi_i(T_\xb V) < i\right\}.\]
We naturally extend this definition to $i=\delta+1$, by setting
$W({\delta+1},F)=V$.

For $1\le i \le \delta+1$, let $\jac(F)$, resp.\ $\jac(F, i)$, denote the
Jacobian matrix of $F=(f_1,\hdots,f_p)$ with respect to
$(X_{1},\hdots,X_n)$, resp.\ to $(X_{i+1},\hdots,X_n):$
\[
\jac(F)=
\left[ 
\begin{array}{ccc}
\frac{\pa f_1}{\pa X_{1}}&\hdots& \frac{\pa f_1}{\pa X_{n}} \\
\vdots& &\vdots\\
\frac{\pa f_p}{\pa X_{1}}&\hdots& \frac{\pa f_p}{\pa X_{n}} 
\end{array}
\right ],
\quad
\jac(F, i)=
\left[ 
\begin{array}{ccc}
\frac{\pa f_1}{\pa X_{i+1}}&\hdots& \frac{\pa f_1}{\pa X_{n}} \\
\vdots& &\vdots\\
\frac{\pa f_p}{\pa X_{i+1}}&\hdots& \frac{\pa f_p}{\pa X_{n}} 
\end{array}
\right]. 
\]
Since $F$ generates a radical ideal, for any $\xb$ in $V$, the tangent
space $T_\xb(V)$ is the kernel of $\jac(F,\xb)$; the assumption that
$V$ be $\delta$-equidimensional and smooth implies that this kernel
has dimension $\delta=n-p$ at all such $\xb$. It follows that 
we can rephrase the definition of $W(i,F)$ as
\[W(i,F) = \left\{\xb \in \C^n~|~ f_1(\xb)=\cdots=f_p(\xb)=0 
\text{~and~} {\rm rank}(\jac(F,i,\xb)) < p\right\}.\] Let $P_i
=\binom{n-i}{p}$ be the number of $p$-minors in $\jac(F,i)$, and let
$M_{i,1},\hdots,M_{i,P_i}$ be these minors (for $i=\delta+1$,
$P_{\delta+1}=0$ since $\jac(F,{\delta+1})$ has size $p \times
(p-1)$). Then, as in Section~\ref{ssec:detvar}, we deduce that
$W(i,F)$ is defined by the polynomials
\begin{equation}\label{eq:frkJ}
\frkJ(i,F) = \big(f_1,\hdots,f_p,M_{i,1},\hdots,M_{i,P_{i}}\big).  
\end{equation}
The downside to defining polar varieties using minors of the truncated
Jacobian matrix is that these equations are in general not complete
intersection, due to the relations between minors of a matrix (the
hypersurface case is an exception, since in this case only partial
derivatives are used to define polar varieties). For both the
polynomial system algorithm we will use below, and an application we
will make of an effective Nullstellensatz, it will be necessary to
have equations without such relations. To make this possible, we use
an alternative modeling of polar varieties that uses Lagrange
variables, as in Section~\ref{ssec:detvar}. We may thus consider the zero-set of the polynomials
\[ \big(F,\ [L_1~\cdots~L_p]\cdot \jac(F, i)\big ) \in \C[X_1,\dots,X_n,L_1,\dots,L_p]^{p+n-i},\]
but as before, we will want to discard from the zero-set of these
equations in $\C^{n+p}$ those components where all $L_i$'s vanish
identically. We pointed out that the saturation needed to remove such components
is unlikely to yield convenient sets of generators, so we will again
introduce a single additional equation, of the form $u_1 L_1 + \cdots
+ u_p L_p -1$, for a certain $\bm u = (u_1,\dots,u_p)$ in
$\C^p$. Thus, for such a vector $\ub$, we define the following
polynomials:
\begin{equation}\label{eqdef:Iil}
\frkL(i,F,\bm u)= 
\big (F,\ [L_1~\cdots~L_p]\cdot \jac(F, i),\ u_1 L_1 + \cdots + u_p L_p -1 \big )
\in \C[X_1,\dots,X_n,L_1,\dots,L_p]^{p+n-i+1}.
\end{equation}
Introducing the last equation discards all solutions with $L_1 =
\cdots = L_p =0$, but other components of interest may be removed as
well. However, Proposition~\ref{prop:projection} shows that for a {\em
  generic} vector $\bm u$, the Zariski closure of the projection of
the zero-set of these equations on the $X_1,\dots,X_n$-space is indeed
$W(i,F)$.  In the algorithm, we will use random $u_i$'s; the former
proposition will allow us to quantify bad choices.


\subsection{The algorithm}\label{ssec:algo}

All notation being as before, we can now give the outline of Safey El
Din and Schost's algorithm for computing at least one point in each
connected component of $V(F) \cap\R^n$. To ensure its correctness, we
will need certain genericity assumptions, which will be discussed in
detail in the next sections.

After applying a randomly chosen change of variables $\mA$, we further
choose random $\bm\sigma=(\sigma_1,\dots,\sigma_{\delta})$ in
$\C^{\delta}$, with $\delta = n -p$. Then, for $i=1,\dots,\delta+1$,
we compute (in the new coordinates) the points $\xb=(x_1,\dots,x_n)$
satisfying
\begin{equation}\label{eq:syst}
x_1 = \sigma_1,\hdots,x_{i-1} = \sigma_{i-1},\ f_1(\xb)=\cdots=f_p(\xb)=0,\ {\rm rank} (\jac(F, i,\xb)) < p.
\end{equation}
In geometric terms, this means that we compute the intersection of
$W(i,F)$ with the fiber $\pi_i^{-1}(\sigma_1,\dots,\sigma_{i-1})$.
Then, we return the union of all these sets.

Departing from~\cite{EMP}, and following the discussion in the
previous subsection, we will avoid solving the system generated by
$F=(f_1,\dots,f_p)$ and the $p$-minors of $\jac(F, i)$: to control
costs, it will be beneficial to use the Lagrange system
of~\eqref{eqdef:Iil} instead. Hence, some of our genericity
assumptions will concern these equations. For $i=1,\dots,\delta+1$, we
define the following properties:
\begin{description}
\item [$\bm H_i(1):$] $W(i,F)$ is either empty or $(i-1)$-equidimensional;
\item [$\bm H_i(2):$] $0$ is a regular value of the $n+p-i$ polynomials
  $F,\ [L_1~\cdots~L_p]\cdot \jac(F, i)$ in the open set defined by
  $(L_1,\dots,L_p) \ne (0,\dots,0)$;
\item [$\bm H_i(3):$] assuming $\bm H_i(1)$ holds, $W(i,F)$ is either
  empty or in Noether position for $\pi_{i-1}$.
\end{description}
As we will see, these properties hold after applying a generic change
of variables. Properties $\bm H_i(1)$ and $\bm H_i(3)$ ensure that
Eq.~\eqref{eq:syst} defines a finite set (as a consequence of the
definition of Noether position), and guarantee that the output of the
algorithm contains at least one point in each connected component of
$V(F) \cap \R^n$ (this is proved in~\cite[Theorem~2]{EMP}). The second
one will be used to establish that assumption $\bm H^{'}_i$ defined
below holds generically.

Indeed, assuming (possibly after applying a change of variables) that
$F$ satisfies $\bm H_i$, we define our second genericity property:
\begin{description}
\item [$\bm H^{'}_i:$] $\bm \sigma$ is such that $0$ is a regular value of the $n+p-1$
  polynomials
  \[ X_1 - \sigma_1, \dots, X_{i-1} - \sigma_{i-1},\ F,\ [L_1~\cdots~L_p]\cdot \jac(F, i), \]
  in the open set defined by $(L_1,\dots,L_p) \ne (0,\dots,0)$.
\end{description}
Again, we will see that this property holds for a generic choice of
$\bm\sigma$ and that as a consequence, $0$ is a regular value of the
$n+p$ polynomials
\begin{equation}\label{eq:syst2}
  X_1 - \sigma_1, \dots, X_{i-1} - \sigma_{i-1},\ F,\ [L_1~\cdots~L_p]\cdot \jac(F, i),\ u_1 L_1 + \cdots + u_p L_p -1.
\end{equation}
In particular, these equations admit finitely many solutions.

Suppose that for some $i$ in $\{1,\dots,\delta+1\}$,  $F$ satisfies
$\bm H_i$ and $\bm \sigma$ satisfies $\bm H_i^{'}$; then, we know that
both systems~\eqref{eq:syst} and~\eqref{eq:syst2} have finitely many
solutions. In order to find the solutions of~\eqref{eq:syst}, we will
compute those of~\eqref{eq:syst2} and project them on the
$X_1,\dots,X_n$-space; we choose to solve equations~\eqref{eq:syst2},
since for this input, we can use the algorithm in~\cite{SH}, for which
a complete bit complexity analysis is available. To guarantee success
of this approach, we will rely on our last genericity property:
\begin{description}
\item [$\bm H^{''}_i:$] $\bm u$ is such that the projections of the
  solutions of~\eqref{eq:syst2} on the $X_1,\dots,X_n$-space are the
  solutions of~\eqref{eq:syst1}.
\end{description}
Applying Proposition~\ref{prop:projection} to the polynomials
in~\eqref{eq:syst2} shows that this property holds for a generic
choice of $\bm u$ (notice that since~\eqref{eq:syst2} has finitely
many solutions, taking the Zariski closure, as done in the
proposition, is not necessary in this case). If this is the case, the
previous discussion shows that solving the systems~\eqref{eq:syst2},
for $i=1,\dots,\delta+1$, and projecting their solutions on the
$X_1,\dots,X_n$-space, solves our problem.

The next three sections prove the claims made above on the genericity
of these properties: $\bm H_i(1)$ and $\bm H_i(2)$ in
Section~\ref{sec:applications}, as a first application of weak
transversality; $\bm H_i(3)$ in Section~\ref{ssec:Hi2}, as an
application of an effective Nullstellensatz; and $\bm H'_i$ in
Section~\ref{Sec:Hip}, as another first of weak transversality
(essentially, Sard's lemma). In all cases, we gave quantitative form
of these genericity statements. As we pointed out above,
Proposition~\ref{prop:projection} is enough to prove that $\bm H''_i$
holds for generic $\bm u$, and already gives a quantitative statement.


\section{Genericity of ${\bf H}_i(1)$ and ${\bf H}_i(2)$}
\label{sec:applications}

Notation in this section is as before: we let $F=(f_1,\hdots,f_p) \in
\ZZ[X_1,\hdots,X_n]^p$ be a sequence of polynomials defining a radical
ideal, and where the degree of each polynomial is at most $d$; we 
also assume that the zero-set $V(F) \subset \C^n$ is smooth 
and $\delta$-equidimensional, with $\delta=n-p$. 

Consider an $n \times n$ matrix $\A$ with indeterminates with entries
$(\frak A_{j,k})_{1\le j,k \le n}$. In this section, we prove the
following proposition.

\begin{prop}\label{prop:Hi12}
  For $i=1,\dots,\delta+1$, there exists a non-zero polynomial
  $\D_{i,1}$ in $\C[(\frak A_{j,k})_{1\le j,k \le n}]$ of degree
  at most $n(d^{5n}+1)$ and with the following property. For $\mA$ in
  $\C^{n\times n}$, if $\mA$ does not cancel $\D_{i,1}$, then $F^\mA$
  satisfies ${\bf H}_i(1)$ and ${\bf H}_i(2)$.
\end{prop}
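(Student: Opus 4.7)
The plan is to apply the quantitative weak transversality lemma (Proposition~\ref{prop:weak_t}) to the Lagrange system cutting out the polar variety, treating the entries $(\A_{j,k})_{1\le j,k\le n}$ of the change of variables as the parameter block. Concretely, set $\sO := \{(\Xb,\Lb) \in \C^{n+p} : \Lb \neq 0\}$ and consider the mapping
\[
\Phi_i : \sO \times \C^{n^2} \to \C^{n+p-i},\quad (\Xb,\Lb,\A) \;\mapsto\; \bigl(F^\A(\Xb),\,[L_1\;\cdots\;L_p]\cdot\jac(F^\A,i)(\Xb)\bigr).
\]
By construction, property $\bm H_i(2)$ for $F^\A$ is exactly the statement that $0$ is a regular value of the specialization $\Phi_\A := \Phi_i(\cdot,\cdot,\A)$ on $\sO$, so weak transversality will produce a single polynomial in $(\A_{j,k})$ whose non-vanishing guarantees $\bm H_i(2)$.

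The first step, and the technical heart, is verifying that $0$ is a regular value of the total mapping $\Phi_i$ on the open subset of $\sO \times \C^{n^2}$ where $\det(\A) \neq 0$. For this I would perform the change of variables $Y = \A\Xb$, which removes the $\A$-dependence from $F$: the equations become $F(Y) = 0$ and $M(Y,\Lb) \cdot \A'_i = 0$, where $M := [L_1\;\cdots\;L_p]\cdot\jac_Y(F)(Y)$ and $\A'_i$ denotes the last $n-i$ columns of $\A$. Since $V(F)$ is smooth, $\jac_Y(F)$ has full rank $p$ everywhere on $V(F)$, so $M$ is non-zero at any solution with $\Lb \neq 0$. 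The Jacobian of the system then decomposes into two blocks acting on disjoint variable sets: an $F$-block equal to $\jac_Y(F)$, of rank $p$ by smoothness of $V$, and a Lagrange block formed of partial derivatives with respect to the entries of $\A'_i$, that is block-diagonal with each diagonal block equal to the non-zero row $M$ and hence contributes rank $n-i$. The total rank is $p+(n-i) = n+p-i$, which is full.

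Once this regularity is established, I would apply Proposition~\ref{prop:weak_t}, estimating degrees via Corollary~\ref{coro:degree}: the equations of $\Phi_i$ have total degree at most $2d$ in $(\Xb,\Lb,\A)$ and the Jacobian $\jac_{\Xb,\Lb}(\Phi_i)$ has entries of degree at most $2d-1$, producing a polynomial $\Gamma_i \in \C[(\A_{j,k})]$ of degree $d^{O(n)}$ whose non-vanishing forces $\bm H_i(2)$; careful bookkeeping with $p \le n$ and $i \ge 1$ fits this inside the target bound $n(d^{5n}+1)$. I would then set $\Delta_{i,1} := \det(\A)\cdot\Gamma_i$ (the $\det$ factor accounts for the change of variables being a local isomorphism only on $\det(\A) \neq 0$). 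For $\bm H_i(1)$, assuming $\bm H_i(2)$ holds, the zero set $Z^* := \Phi_\A^{-1}(0) \cap \sO$ is smooth and equidimensional of dimension $i$, and Proposition~\ref{prop:projection} (for generic $\bm u$) identifies $W(i,F^\A)$ with the closure of the $\Xb$-projection of the smooth equidimensional $(i-1)$-dimensional set $Z^* \cap \{u_1L_1+\cdots+u_pL_p = 1\}$, so $\dim W(i,F^\A) \le i-1$. The main obstacle I expect is ruling out components of $W(i,F^\A)$ of dimension strictly less than $i-1$, which would correspond to rank drops of $\jac(F^\A,i)$ exceeding $1$; this requires an additional Bertini-style genericity condition on $\A$, encodable as an extra factor of bounded degree inside $\Delta_{i,1}$ via a second application of weak transversality to the successive determinantal strata.
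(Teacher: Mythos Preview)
Your approach differs structurally from the paper's. The paper does \emph{not} apply weak transversality to $(\Xb,\Lb,\A) \mapsto (F^\A,\, \Lb \cdot \jac(F^\A,i))$. Instead it introduces auxiliary multipliers $T_1,\dots,T_i$ and studies
\[
\Phi(\Xb,\Lb,\bm T,\bm a) \;=\; \bigl(F,\ [L_1\,\cdots\,L_p\ T_1\,\cdots\,T_i]\cdot \bm K_i\bigr),\qquad
\bm K_i = \begin{bmatrix}\jac(F)\\ \bm a\end{bmatrix},
\]
with $\bm a \in \C^{i\times n}$ as parameters and $F$ kept in the \emph{original} variables. After weak transversality produces $\Gamma_i$ in the entries of $\bm a$, one substitutes for $\bm a$ the first $i$ rows of $\A^{-1}$ and clears denominators; the factor $n$ in the target bound comes from this inversion. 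This buys two things: the defining polynomials of $\Phi$ have degree $\le d$ rather than $2d$, so Proposition~\ref{prop:weak_t} gives $\deg(\Gamma_i)\le d^{(n+p+i)+(n+p)}\le d^{5n}$; and the description via rank-deficiency of $\bm K_i$ on $V(F)$ lets one invoke the Eagon--Northcott bound (Lemma~B.5 in~\cite{TWT}) to get $\dim \ge i-1$ on every component for \emph{any} $\bm b$, with no further genericity.

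Your route is sound for $\bm H_i(2)$---the substitution $Y=\A\Xb$ and the block-rank argument are correct---but two gaps remain. First, the polynomials $F^\A$ and $\Lb\cdot\jac(F^\A,i)$ have total degree $\le 2d$ in $(\Xb,\Lb,\A)$, so Proposition~\ref{prop:weak_t} only yields $\deg(\Gamma_i)\le (2d)^{(n+p-i)+(n+p)}$; for $d=2$ and $p$ close to $n$ this exceeds $n(d^{5n}+1)$, so the ``careful bookkeeping'' does not close as claimed. (There is also the minor issue that Proposition~\ref{prop:weak_t} is stated on $\sO\times\C^s$, whereas you need it on the open subset $\det(\A)\ne 0$; this requires a small extension of the statement.) Second, and more substantively, your argument for $\bm H_i(1)$ only gives the upper bound $\dim W(i,F^\A)\le i-1$ via projection; equidimensionality---ruling out components of dimension $<i-1$---is precisely the obstacle you flag. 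Your proposed fix through successive determinantal strata would require further genericity factors and degree growth, while the paper's formulation via $\bm K_i$ gets the lower bound for free from Eagon--Northcott, independently of any choice of $\A$.
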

The rest of this section is devoted to the proof of the proposition;
it is based on a construction introduced by Giusti, Heintz {\it et
  al.} (see for instance~\cite{BaGiHeSaSh10}). In all that follows,
$i$ is fixed in $1,\dots,\delta+1$; we then let $\A_{\le i}$ denote
the $in$ indeterminates $(\frak A_{j,k})_{1\le j \le i, 1 \le k \le
  n}$. Writing $\bm X=X_1,\dots,X_n$, we let $\bm K_i(\bm X,\A_{\le
  i})$ denote the $(p+i)\times n$ matrix
\[
\bm
K_i(\bm X, \A_{\le i})=
\bbm 
\jac(F)\\
\A_{1,1}~~ \hdots ~~\A_{1,n}\\
\vdots\hspace{10mm}\vdots\\
\A_{i,1}~~ \hdots ~~\A_{i,n}
\ebm.
\]
Consider elements $\bm a \in \C^{in}$ as vectors of length $i$ of the
form $\bm a = (\bm a_1,\hdots,\bm a_i)$ with $\bm a_i \in \C^n$;
we say that $\bm a$ has rank $i$
when $\bm a$ is a sequence of linearly independent vectors.  Then
for such an $\bm a$, $\bm K_i(\Xb,\bm a)$ is naturally defined with the
indeterminates $\A_{\le i}$ evaluated at $\bm a$. 

Let $\Phi: \C^{n+p+i}\times \C^{i n} \to \C^{n+p}$ be  the polynomial mapping
in indeterminates 
$\bm X=X_1,\dots,X_n$, $\bm L=L_1,\dots,L_p$, $\bm T=T_1,\dots,T_p$ 
and $\A_{\le i}$ 
defined as
\[\Phi = ( F,\ [L_1 ~\cdots~L_p~T_1~\cdots~T_i] \cdot \bm K_i ),\]
and for $\bm a$ in $\C^{ni}$, let $\Phi_{\bm a}: \C^{n+p+i} \to
\C^{n+p}$ be the induced mapping $\Phi_{\bm a} = \Phi(\bm X, \bm L, \bm
T, \bm a)$ in variables $\bm X$, $\bm L$ and $\bm T$.

Let further $\sA \subset \C^{n+p+i}$ be the open set defined by the
condition $(L_1,\dots,L_p) \ne (0,\dots,0)$. In \cite[Section
  3.2]{BaGiHeSaSh10}, it is shown that, for any $(\xb, \bm \lambda,\bm
\thetab, \bm a)$ in $\sA \times \C^{in}$, the Jacobian matrix
$\jac(\Phi)$, taken with respect to all indeterminates $\bm X,\bm
L,\bm T,\A_{\le i}$, has full rank $n+p$ at $(\xb, \bm \lambda, \bm
\thetab, \bm a)$. In particular, this is true for $(\xb, \bm \lambda,
\bm \thetab, \bm a)$ in $\Phi^{-1}(0)$, so that $0$ is a regular value
of $\Phi$ on $\sA \times \C^{in}$.
It therefore follows by Proposition \ref{prop:weak_t} that there
exists a non-zero polynomial $\Gamma_i \in
\C[\A_{1,1},\hdots,\A_{i,n}]$ of degree at most
\[
d^{(n+p+i)+(n+p)} \le d^{5n},
\]
such that if $\ab \in \C^{i n}$ does not cancel $\Gamma_i$,
then $0$ is a regular value of $\Phi_{\bm a}$ on $\sA$. That is, for
$(\xb, \bm \lambda,\bm \thetab) \in \sA \cap \Phi^{-1}_{\bm a}(0)$,
the Jacobian matrix $\jac(\Phi_{\bm a})$ has full rank $n+p$
at ${(\xb, \bm \lambda,\bm \thetab)}$.
   
Let $\fB=\A^{-1}$ in $\C((\frak A_{j,k})_{1\le j,k \le n})^{n \times n}$ and let
$\fB_1=[\fB_{1,1},\hdots,\fB_{1,n}],\hdots,\fB_n=[\fB_{n,1},\hdots,\fB_{n,n}]$
denote the rows of $\fB.$ Set
\[
\Delta_{i,1} := \Gamma_i(\fB_1,\hdots,\fB_i)\cdot (\det( \A))^{\deg (\gi)+1}. 
\]
By multiplying through by $(\det( \A))^{\deg( \gi)+1},$ we cancel all
denominators and make $\D_{i,1}$ a polynomial multiple of $\det(\A)$.
\begin{lemma}
  The degree of $\Delta_{i,1}$ is at most $n(d^{5n}+1).$
\end{lemma}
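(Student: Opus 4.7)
The plan is to bound the degree of $\Delta_{i,1}$ by expressing each row $\fB_j$ of $\fB = \A^{-1}$ as rational functions in the entries of $\A$ via Cramer's rule, and then tracking how substitution into $\Gamma_i$ interacts with the clearing of denominators provided by the factor $(\det(\A))^{\deg(\Gamma_i)+1}$.

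First I would recall that by Cramer's rule every entry $\fB_{j,k}$ equals $C_{j,k}/\det(\A)$, where $C_{j,k}$ is (up to sign) a cofactor of $\A$, hence a polynomial in the entries of $\A$ of degree at most $n-1$. Take a monomial $m$ of $\Gamma_i$ in the $in$ variables $(\frak A_{j,k})_{1 \le j \le i,\,1 \le k \le n}$ of total degree $e \le \deg(\Gamma_i) \le d^{5n}$. Under the substitution $\A_{j,k} \mapsto \fB_{j,k}$, $m$ becomes a rational function whose numerator has degree at most $(n-1)e$ in the entries of $\A$ and whose denominator equals $(\det(\A))^e$.

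Next I would multiply by $(\det(\A))^{\deg(\Gamma_i)+1}$. The contribution of $m$ then becomes a genuine polynomial of degree at most
\[
(n-1)e + n\bigl(\deg(\Gamma_i)+1-e\bigr) = n\deg(\Gamma_i) + n - e \le n\bigl(\deg(\Gamma_i)+1\bigr).
\]
Summing over the finitely many monomials appearing in $\Gamma_i$ gives $\deg(\Delta_{i,1}) \le n(\deg(\Gamma_i)+1) \le n(d^{5n}+1)$, as claimed.

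There is no genuine obstacle here: the argument is just bookkeeping, and the key observation is that the exponent $e$ of a monomial of $\Gamma_i$ simultaneously governs the numerator growth (at most $(n-1)e$) and the denominator growth ($(\det \A)^e$), so the two effects nearly cancel after clearing denominators. The only subtlety worth confirming is that the one extra power of $\det(\A)$, inserted to make $\Delta_{i,1}$ divisible by $\det(\A)$, contributes only the additive term $n$ in the final bound.
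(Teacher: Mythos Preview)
Your proof is correct and follows essentially the same approach as the paper: both use Cramer's rule to write the entries of $\fB$ as cofactors over $\det(\A)$, then track degrees after substitution and clearing denominators. Your version is slightly more careful (using the sharper cofactor degree bound $n-1$ and arguing monomial by monomial), but the structure and conclusion are the same.
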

\begin{proof}
  Assume that 
  \[
  \fB_{s,t}=\fN_{s,t}/{\rm det}(\A) \quad\text{with}\quad \fN_{s,t},\det(\A) \text{~in~} \C[(\frak A_{j,k})_{1\le j,k \le n}],
  \]
  for $1 \le s,t \le n$. Then, by Cramer's formulas, we have $\deg(
  \fN_{s,t}),\deg(\det(\A)) \leq n,$ and since we have cleared all
  denominators by multiplying through with $(\det( \A))^{\deg
    (\gi)+1},$ and guaranteed the presence of an extra factor
  $\det(\A)$, we therefore obtain
  \[
  \deg( \Delta_{i,1}) \leq n\deg( \gi) + n \leq n(d^{5n}+1). \qedhere
  \]
\end{proof}
We first prove that $\Delta_{i,1}$ allows us to control when $F^\mA$
satisfies ${\bf H}_i(1)$.  The main ingredients in the proof of the
following lemma are taken from~\cite{TWT}, with no modification;
this reference itself follows previous work such as~\cite{BaGiHeSaSh10}.
\begin{lemma}
  For $\mA$ in $\C^{n\times n}$, if $\mA$ does not cancel $\D_{i,1}$,
  then $\mA$ is invertible and the polar variety $W(i,F^\mA)$ is
  either empty or $(i-1)$-equidimensional.
\end{lemma}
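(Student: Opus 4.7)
The plan is to deduce both conclusions from Proposition~\ref{prop:weak_t} applied to the mapping $\Phi$ at the parameter vector given by the first $i$ rows of $\mA^{-1}$.

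Since $\det(\A)$ divides $\Delta_{i,1}$ by the very construction of $\Delta_{i,1}$, any $\mA$ not cancelling $\Delta_{i,1}$ is invertible. Writing $\bm a := (\fB_1(\mA),\dots,\fB_i(\mA)) \in \C^{in}$ for the first $i$ rows of $\mA^{-1}$, the factorisation $\Delta_{i,1}=\Gamma_i(\fB_1,\dots,\fB_i)\cdot(\det\A)^{\deg(\Gamma_i)+1}$ also forces $\Gamma_i(\bm a)\ne 0$. By the defining property of $\Gamma_i$, $0$ is then a regular value of the induced mapping $\Phi_{\bm a}:\sA\to\C^{n+p}$, where $\sA\subset\C^{n+p+i}$ is the open set defined by $(L_1,\dots,L_p)\ne(0,\dots,0)$.

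Next I would set up a geometric dictionary between $W(i,F^\mA)$ and $Y_{\bm a}:=\Phi_{\bm a}^{-1}(0)\cap\sA$. Under the linear change of variables $\bm y=\mA\bm x$, the chain rule gives $\jac(F^\mA)(\bm x)=\jac(F)(\bm y)\cdot\mA$ and hence $T_{\bm x}V(F^\mA)=\mA^{-1}\,T_{\bm y}V(F)$; the projection $\pi_i$ in $\bm x$-coordinates then corresponds to the action of the first $i$ rows of $\mA^{-1}$ on $T_{\bm y}V(F)$. A short rank--nullity calculation rewrites the criticality condition $\dim\pi_i(T_{\bm x}V(F^\mA))<i$ as the rank-deficiency of the matrix $\bm K_i(\bm y,\bm a)$. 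Since $\jac(F)(\bm y)$ has full rank $p$ on $V(F)$ by smoothness, any left null vector of $\bm K_i(\bm y,\bm a)$ must have a non-zero $\bm L$-part, so this is in turn equivalent to the existence of $(\bm L,\bm T)$ with $(\bm y,\bm L,\bm T)\in Y_{\bm a}$. Hence $\mA\cdot W(i,F^\mA)$ coincides with the projection $\mu(Y_{\bm a})$, where $\mu:(\bm y,\bm L,\bm T)\mapsto\bm y$.

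Because $\Phi_{\bm a}$ consists of $n+p$ polynomials in $n+p+i$ variables and $0$ is a regular value on $\sA$, the Jacobian criterion makes $Y_{\bm a}$ either empty or smooth and $i$-equidimensional. In the empty case the dictionary gives that $W(i,F^\mA)$ is empty as well, and we are done. Otherwise, I would finish with a fiber-dimension argument: the equations $[\bm L\ \bm T]\cdot\bm K_i(\bm y,\bm a)=0$ are linear and homogeneous in $(\bm L,\bm T)$, so every non-empty fiber of $\mu$ is a punctured linear subspace through the origin; generically on each irreducible component of $Y_{\bm a}$ this subspace is one-dimensional, and the theorem on fiber dimensions then forces $\mu(Y_{\bm a})$ to be $(i-1)$-equidimensional. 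Equidimensionality transfers to $W(i,F^\mA)$ through the invertible linear map $\mA^{-1}$. The main obstacle here is precisely verifying that on every irreducible component of $Y_{\bm a}$ the generic fibre of $\mu$ is exactly one-dimensional, so that no component gets collapsed to a set of dimension strictly below $i-1$; this structural information on the Lagrange incidence variety is already available in \cite{BaGiHeSaSh10,TWT}, from which I would quote rather than reprove.
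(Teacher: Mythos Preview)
Your approach is correct and reaches the same conclusion, but it is organised differently from the paper's proof. The paper never lifts to the Lagrange incidence variety for this lemma: it works directly in $\C^n$ with the determinantal set
\[
Y=\{\bm y\in V(F)\mid \rk(\bm K_i(\bm y,\bm b))<p+i\},\qquad \bm b=\text{first $i$ rows of }\mA^{-1},
\]
then quotes two black boxes from~\cite{TWT}: Lemma~B.5 (the Eagon--Northcott lower bound, giving $\dim\ge i-1$ for every component, independent of $\bm b$) and Lemma~B.11 (the upper bound $\dim\le i-1$, which is exactly where the regular-value hypothesis on $\Phi_{\bm b}$ enters). The identification $Y^{\mA}=W(i,F^{\mA})$ is also quoted from that reference. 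Your route instead uses the regular-value hypothesis to make $Y_{\bm a}=\Phi_{\bm a}^{-1}(0)\cap\sA$ smooth and $i$-equidimensional, and then reads off the \emph{upper} bound from the fibre-dimension theorem; this is a genuine gain, since you are effectively reproving Lemma~B.11 rather than quoting it. For the \emph{lower} bound, however, you still need external input, and what is actually quotable from \cite{BaGiHeSaSh10,TWT} is the Eagon--Northcott statement on $Y$ itself, not the ``generic fibre is one-dimensional'' reformulation you describe; the two are equivalent, but the former is the natural thing to cite.

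One small correction: the reason every non-zero left null vector of $\bm K_i(\bm y,\bm a)$ has $\bm L\neq 0$ is that the rows $\bm a_1,\dots,\bm a_i$ are linearly independent (so $\bm L=0$ forces $\bm T=0$), not smoothness of $V(F)$. Smoothness is what you need at the \emph{other} step of your dictionary, to show that any non-zero null vector has $\bm T\neq 0$, which is what links rank-deficiency of $\bm K_i$ to the criticality condition $\dim\pi_i(T_{\bm x}V(F^{\mA}))<i$.
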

\begin{proof}
  Consider $\bm A \in \C^{n \times n}$ that does not cancels
  $\D_{i,1}$.  Since $\det(\A)$ divides $\D_{i,1}$, $\bm A$ is
  invertible, and by construction the first $i$ rows $\bm b$ of $\bm
  A^{-1}$ do not cancel $\gi$. We put
  \[
  Y := \left\{\xb \in V(F)~|~\rk (\bm K_i(\bm x,\bm b)) < p+i\right\}. 
  \]
  Lemma B.5 from~\cite{TWT} shows that all irreducible components of
  $Y$ have dimension at least $i-1$; this is essentially Eagon and
  Northcott's result on determinantal varieties~\cite{EN62}, and does
  not depend on our choice of $\bm b$. On the other hand, our
  assumption on $\bm b$ allows us to apply Lemma B.11 from~\cite{TWT},
  which shows that all irreducible components of $Y$ have
  dimension at most $i-1$.  Therefore, $Y$ is either empty or
  ($i-1)$-equidimensional. To conclude the proof, we use the equality
  \[
  Y^{\mA} = W\left(i,F^{\mA}\right),
  \]
  established in the same reference immediately before Lemma~B.10.
\end{proof}
We conclude this section with the second property, ${\bf H}_i(2)$.
\begin{lemma}
  For $\mA$ in $\C^{n\times n}$, if $\mA$ does not cancel $\D_{i,1}$,
  then $0$ is a regular value of the $n+p-i$ polynomials
  $F^\mA,\ [L_1~\cdots~L_p]\cdot \mathrm {jac}(F^\mA,i)$ in the open
  set defined by $(L_1,\dots,L_p) \ne (0,\dots,0)$.
\end{lemma}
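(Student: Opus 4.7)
The plan is to deduce this Jacobian regularity statement from the one already established in this section for the parametric map $\Phi_{\bm b}$. As in the proof of the previous lemma, the hypothesis that $\mA$ does not cancel $\Delta_{i,1}$ guarantees that $\mA$ is invertible and that the tuple $\bm b = (\bm b_1, \dots, \bm b_i) \in \C^{i n}$ formed by the first $i$ rows of $\mA^{-1}$ does not cancel $\Gamma_i$. By the construction of $\Gamma_i$ via Proposition~\ref{prop:weak_t}, this means that $0$ is a regular value of $\Phi_{\bm b}$ on the open set $\sA$; equivalently, the Jacobian of $\Phi_{\bm b}$ with respect to $(X_1, \dots, X_n, L_1, \dots, L_p, T_1, \dots, T_i)$ has full row rank $n+p$ at every zero with $(L_1,\dots,L_p) \ne (0,\dots,0)$.

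I next translate this into a statement about $F^\mA$ using the chain rule identity $\jac(F^\mA)(\xb) = \jac(F)(\mA \xb) \cdot \mA$. Writing $\bm \ell \in \C^p$ and $\bm t \in \C^i$ for specific values of $(L_1,\dots,L_p)$ and $(T_1,\dots,T_i)$, and using the fact that the $k$-th row $\bm b_k$ of $\mA^{-1}$ coincides with the $k$-th row of $[\textbf{I}_i \mid 0] \cdot \mA^{-1}$, a direct computation yields
\[
\Phi_{\bm b}(\mA \xb, \bm \ell, \bm t) = \bigl(F^\mA(\xb),\ (\bm \ell \cdot \jac(F^\mA)(\xb) + [\bm t \mid \bm 0_{n-i}]) \cdot \mA^{-1}\bigr).
\]
Introduce the auxiliary map $\Lambda : \C^{n+p+i} \to \C^{n+p}$ defined by
\[
\Lambda(\xb, \bm \ell, \bm t) := \bigl(F^\mA(\xb),\ \bm \ell \cdot \jac(F^\mA)(\xb) + [\bm t \mid \bm 0_{n-i}]\bigr).
\]
Then $\Lambda$ differs from $\Phi_{\bm b}$ by pre-composition with the source isomorphism $(\xb, \bm\ell, \bm t) \mapsto (\mA \xb, \bm\ell, \bm t)$ and post-composition with the target isomorphism $(\bm u, \bm v) \mapsto (\bm u, \bm v \cdot \mA)$. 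Both preserve the row rank of the Jacobian as well as the open condition $\bm \ell \ne \bm 0$, so that $0$ is also a regular value of $\Lambda$ on $\{\bm \ell \ne \bm 0\}$.

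The final step is to eliminate the auxiliary variables $\bm t$. The last $n - i$ entries of the second component of $\Lambda$ are precisely $\bm \ell \cdot \jac(F^\mA, i)(\xb)$, and they involve neither $\bm t$ nor the first $i$ columns of $\jac(F^\mA)$. Hence a pair $(\xb, \bm \ell)$ with $\bm \ell \ne \bm 0$ satisfies $F^\mA(\xb) = 0$ and $\bm \ell \cdot \jac(F^\mA, i)(\xb) = 0$ if and only if $(\xb, \bm \ell, \bm t^*)$ is a zero of $\Lambda$, where $\bm t^* \in \C^i$ is the unique value cancelling the $i$ middle entries. At such a point, the $i$ middle rows of $\jac(\Lambda)$ contain the identity block $\textbf{I}_i$ in the $\bm t$-columns, while the remaining $p + (n-i)$ rows have zero partials with respect to $\bm t$; a block-rank argument then shows that these remaining rows, restricted to partials with respect to $(\xb, \bm \ell)$, have full row rank $n+p-i$, which is precisely the Jacobian condition expressing that $0$ is a regular value of $\bigl(F^\mA,\ [L_1\ \cdots\ L_p] \cdot \jac(F^\mA, i)\bigr)$ on $\{(L_1,\dots,L_p) \ne (0,\dots,0)\}$.

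The main obstacle I anticipate is the bookkeeping in the middle paragraph: verifying the explicit chain-rule identity linking $\Phi_{\bm b}$ to $\Lambda$, and then confirming that both the source and target linear isomorphisms faithfully transport Jacobian row-rank together with the open-set restriction $\bm \ell \ne \bm 0$ from one map to the other.
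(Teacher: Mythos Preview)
Your argument is correct and follows essentially the same route as the paper. Your auxiliary map $\Lambda$ is exactly the system the paper writes as $F^\mA,\ [L_1\cdots L_p\ T_1\cdots T_i]\begin{bmatrix}\jac(F^\mA)\\ \bm I_i\ \bm 0\end{bmatrix}$; your pre/post-composition argument is the paper's passage between its equations (\ref{eq:poly0})--(\ref{eq:poly2}) done in the reverse direction; and your final block-rank elimination of $\bm t$ is identical to the paper's removal of the $i$ rows and columns carrying the $\bm I_i$ block.
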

\begin{proof}
  Take $\mA$ in $\C^{n \times n}$ so that $\Delta_{i,1}(\mA) \not =
  0$, and let $(\xb,\bm\ell) \in \C^{n+p}$ be a zero of the $n+p-i$
  polynomials $F^\mA$ and $[L_1~\cdots~L_p]\cdot \mathrm {jac}(F^\mA,i)$,
  with $\bm \ell$ non-zero. We have to show that the Jacobian matrix
  of these polynomials has full rank $n+p-i$ at $(\xb,\bm\ell)$.

  We define a vector $\thetab = [\vartheta_1 ~\cdots~\vartheta_i] \in
  \C^i$ by writing $\bm \ell \cdot \mathrm{jac}(F^\mA) = [
    -\vartheta_1 ~\cdots~-\vartheta_i ~0~\cdots~0 ]$ (the trailing
  zeros result from our assumption on $\xb$ and $\bm\ell$). It
  follows that $(\xb,\bm \ell,\thetab)$ cancels the equations
  \begin{equation}\label{eq:poly0}
  F^\mA, \quad [L_1 ~\cdots~ L_p~ T_1 ~\cdots~ T_i] \bbm \mathrm{jac}(F^\mA) \\ \bm I_i~~\bm 0_{i \times (n-i)} \ebm,
  \end{equation}
  where the Jacobian matrix of $F$ is taken with respect to the
  variables $\bm X= X_1,\dots,X_n$. We then post-multiply the
  right-hand matrix by $\mA^{-1}$, and use the fact that ${\rm
    jac}(F^\mA) = {\rm jac}(F)^\mA \, \mA$.  This shows that $(\xb,\bm
  \ell,\thetab)$ also cancels the polynomials
  \begin{equation}\label{eq:poly1}
  F^\mA, \quad [L_1 ~\cdots~ L_p~ T_1 ~\cdots~ T_i] \bbm \mathrm{jac}(F)^\mA \\ \bm b  \ebm,    
  \end{equation}
  where again $\bm b$ denotes the first $i$ rows of $\mA^{-1}$.
  Setting $\xb'=\mA^{-1} \xb$, we deduce that the point $(\xb',\bm\ell,\thetab)$ 
  cancels 
  \begin{equation}\label{eq:poly2}
    F, \quad [L_1 ~\cdots~ L_p~ T_1 ~\cdots~ T_i] \bbm \mathrm{jac}(F) \\ \bm b  \ebm,    
  \end{equation}
  that is, the polynomials $\Phi_{\bm b}$ defined in the preamble.
  The assumption on $\mA$ shows that $0$ is a regular value of this
  mapping in the open set defined by $(L_1,\dots,L_p) \ne
  (0,\dots,0)$.  Since $\bm \ell$ is by definition non-zero, this
  implies that the Jacobian matrix of the polynomials in
  Eq.~\eqref{eq:poly2} has full rank $n+p$ at
  $(\xb',\bm\ell,\thetab)$. Back in the original coordinates, we
  deduce that the Jacobian matrix of the polynomials in
  Eq.~\eqref{eq:poly1} has full rank $n+p$ at
  $(\xb,\bm\ell,\thetab)$. Right multiplication by $\mA^{-1}$
  in~\eqref{eq:poly2} amounts to performing a linear combination of
  the equations; hence, the Jacobian matrix of the polynomials in
  Eq.~\eqref{eq:poly0} has full rank $n+p$ at $(\xb,\bm\ell,\thetab)$
  as well.

  The Jacobian matrix of these polynomials taken with respect to the
  variables $X_1,\dots,X_n$, $L_1,\hdots,L_p$ and $T_1,\hdots,T_i$ is
  equal to
  \begin{align*}
    \left[ 
      \begin{array}{cc}
        \jac(F^{\mA})~~~~~ \bz_{p \times p} & \bz_{p\times i}\\
        \jac_{\bm X,\bm L}\left([\bm L, \bm T ] \cdot 
        \bbm 
        \jac(F^{\mA})\\
        \bm I_i~~\bm 0_{i \times n-i} \\
        \ebm\right) & \bmat \bm I_{i}\\ \bz_{(n-i)\times i} \emat\\
      \end{array}
      \right]
    &=
    \left[ 
      \begin{array}{cc}
        \jac(F^{\mA}) ~~~~ \bz_{p \times p} & \bz_{p\times i} \\
        \ast \ast \ast & \bm I_{i}\\
    \jac_{\bm X,\bm L}\left(\bm L \cdot \jac(F^{\mA},i) \right)& \bz_{(n-i)\times i}
      \end{array}
      \right].
    \end{align*}
    Therefore, after removing $i$ rows and columns, we can see that
    the submatrix
    \begin{align}
    \left[ 
    \begin{array}{c}
    \jac(F^{\mA}) ~~~~ \bz_{p \times p}\\
    \jac_{\bm X,\bm L}\left(\bm L \cdot \jac(F^{\mA},i) \right) 
    \end{array}
    \right]
    \end{align}
    has full rank $n + p-i$ at $(\xb,\bm\ell)$. 
\end{proof}


\section{Genericity of $\textbf{H}_i(3)$}\label{ssec:Hi2}

Notation being as before, we now discuss the last genericity property
that depends on our choice of coordinates. We already showed that in
generic coordinates, the polar variety $W(i, F)$ is either empty or
$(i-1)$-equidimensional. It remains to do the same for $\bm H_i(3)$, 
that is, to prove that if it is not empty, $W(i, F)$ is generically in Noether
position for $\pi_{i-1}$. We will prove the following, where $\D_{i,1}$ is
from Proposition~\ref{prop:Hi12}.
\begin{prop}\label{prop:Hi123}
  For $i=1,\dots,\delta+1$, there exists a non-zero polynomial 
  $\D_{i,2}$ in $\C[(\A_{k,m})_{1 \le k,m \le n}]$ of degree at most
  $4n^2(2d)^{4n}$ such that if $\mA$ does not cancel $\D_{i,1}
  \D_{i,2}$, then $F^{\mA}$ satisfies $\bm H_i(1)$, $\bm H_i(2)$ and
  $\bm H_i(3)$.
\end{prop}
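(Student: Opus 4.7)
The approach is to extend the construction of Proposition~\ref{prop:Hi12}. Under $\D_{i,1}(\mA) \neq 0$, properties $\bm H_i(1)$ and $\bm H_i(2)$ already hold, and an empty $W(i,F^\mA)$ trivially satisfies $\bm H_i(3)$, so it suffices to produce a polynomial $\D_{i,2}$ whose non-vanishing at $\mA$ ensures that any non-empty, $(i-1)$-equidimensional $W(i,F^\mA)$ is in Noether position for $\pi_{i-1}$. I will use the standard algebraic criterion: for an $(i-1)$-equidimensional $W$, Noether position for $\pi_{i-1}$ is equivalent to the existence, for each $j \in \{i,\dots,n\}$, of a polynomial $P_j \in \C[X_1,\dots,X_{i-1}][X_j]$ monic in $X_j$ that vanishes on $W$; the injectivity part of Noether position is then automatic, since a finite morphism from an $(i-1)$-equidimensional set to $\C^{i-1}$ is necessarily surjective.

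The construction runs in parallel with that of Proposition~\ref{prop:Hi12}: treat $\A$ as a matrix of indeterminates, fix a generic $\bm u \in \C^p$ so that Proposition~\ref{prop:projection} applies, and consider the Lagrange variety $V(\frkL(i,F^\A,\bm u))$, which is (under $\bm H_i(2)$) a complete intersection projecting onto $W(i,F^\A)$. For each $j \in \{i,\dots,n\}$, apply an effective Nullstellensatz to the complete-intersection Lagrange ideal to extract a polynomial $\Pi_j \in \C(\A)[X_1,\dots,X_{i-1},X_j]$ vanishing on $W(i,F^\A)$; clear $\A$-denominators by an appropriate power of $\det(\A)$, and let $\gamma_j(\A) \in \C[\A]$ be its leading coefficient with respect to $X_j$. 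Define
\[
\D_{i,2} \;:=\; (\det \A)^N \prod_{j=i}^{n} \gamma_j(\A),
\]
with $N$ chosen to clear the remaining $\A^{-1}$-denominators introduced by pulling back coordinates, exactly as in the proof of Proposition~\ref{prop:Hi12}. If $\D_{i,2}(\mA) \neq 0$, each $\Pi_j|_{\A=\mA}$ is monic in $X_j$, witnessing $\bm H_i(3)$. The non-vanishing of $\gamma_j$ as a polynomial in $\A$ follows from the qualitative genericity established in~\cite{EMP}: there exists at least one invertible $\mA_0$ for which $W(i,F^{\mA_0})$ is in Noether position, so the leading coefficient cannot vanish identically.

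For the degree estimate, Corollary~\ref{coro:degree} gives $\deg(W(i,F^\A)) \leq d^p (d+1)^{n-i} \leq (2d)^n$, uniformly in $\A$, and an effective Nullstellensatz applied to the Lagrange complete intersection in $n+p$ variables of degree $\leq d+1$ produces each $\Pi_j$ of degree $\leq (2d)^{2n}$ in the joint $(\bm X,\A)$-variables. Tracking the $\A$-degree introduced by $F^\A$ (at most $d$ per polynomial factor) doubles the exponent, yielding $\deg_\A(\gamma_j) \leq (2d)^{4n}$. Multiplying over the $n-i+1 \leq n$ indices $j$ and accounting for the $(\det \A)^N$ prefactor (of degree at most $n \cdot (2d)^{4n}$) gives the claimed $\deg(\D_{i,2}) \leq 4n^2 (2d)^{4n}$.

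The principal difficulty is the joint treatment of the two variable blocks $\bm X$ and $\A$: one must apply the effective Nullstellensatz with $\A$-parameters, essentially working over $\C(\A)$ and carefully clearing denominators. A secondary subtlety is that the Lagrange equations depend on both $\bm u$ and $\A$, so we must verify that a single generic choice of $\bm u$ (quantified by Proposition~\ref{prop:projection}) works uniformly for the whole family $F^\A$, rather than just each specialization $F^\mA$ separately; this bookkeeping, together with the exponent tracking above, is the main technical burden of the proof.
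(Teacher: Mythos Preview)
Your high-level strategy matches the paper's: work over $\C(\A)$, pass to the Lagrange system to get a complete intersection, and invoke an effective Nullstellensatz with controlled parameter degree. But your execution conflates two steps that the paper keeps separate, and this creates a real gap.

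The effective Nullstellensatz does not ``extract'' a polynomial $\Pi_j \in \C(\A)[X_1,\dots,X_{i-1},X_j]$ vanishing on $W(i,F^\A)$; it bounds degrees in a membership identity for a polynomial you must already possess. The paper first \emph{constructs} such a polynomial $P_j$, monic in $X_j$ and lying in $\sqrt{\mathscr K}$, by combining the qualitative Noether-position result of~\cite{EMP} (Lemma~\ref{lem:6.1}) with the degree bound of~\cite{DaSc04} (Lemma~\ref{lemma:boundP}), obtaining $\deg(\overline{P_j}) \le (2d)^{2n}$. Only then does it apply the Rabinowitsch trick and the effective Nullstellensatz of~\cite{EN} to the system $G_1,\dots,G_{n+p-i+1},\, 1-T\overline{P_j}$, producing an identity $A_j = \sum_\ell C_{j,\ell} G_\ell + B_j(1-T\overline{P_j})$ with $A_j \in \C[\A,\frak u]$ nonzero and $\deg(A_j) \le 4n(2d)^{4n}$ directly from~\cite[Theorem~0.5]{EN}. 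The polynomial $\D_{i,2}$ is then the product of nonzero $\A$-coefficients $\alpha_j$ of the $A_j$'s, \emph{not} leading $X_j$-coefficients as you propose.

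This matters because the obstruction you must control is not whether $\Pi_j$ stays monic under $\A \mapsto \mA$, but whether the \emph{ideal membership} $P_j \in \sqrt{\mathscr K}$ survives specialization; membership in a radical does not specialize automatically. The Rabinowitsch identity is a polynomial identity, so it specializes whenever $A_j(\mA,\bm u) \ne 0$ for some $\bm u$, and that is exactly what $\alpha_j(\mA) \ne 0$ guarantees (Lemma~\ref{lem:6.4}). Your $\gamma_j$ does not witness this. Relatedly, your non-vanishing argument for $\gamma_j$ is circular: knowing that some $\mA_0$ yields Noether position tells you that \emph{some} monic polynomial exists at $\mA_0$, not that your particular $\Pi_j$ has nonvanishing leading coefficient there. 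Finally, on $\bm u$: rather than fixing one generic $\bm u$ and worrying about uniformity, the paper makes $\frak u_1,\dots,\frak u_p$ indeterminate, obtains $A_j \in \C[\A,\frak u]$, extracts a nonzero $\A$-coefficient, and defers the choice of $\bm u$ to the specialization step, where it may depend on $j$ and on $\mA$. This cleanly sidesteps the uniformity issue you flagged.
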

Some results in a similar vein appear in the literature. For instance,
Lemma 5 in~\cite{JeSa10} and Proposition 4.5
in~\cite{SharpEstimatesForTheEffectiveN} are quantitative Noether
position statements. However, our results do not follow from these
previous references, as these previous works analyze the probability
that for a {\em fixed} algebraic set $V$, $V^\mA$ be in Noether
position.  This does not solve our question, since
$W(i,F^\mA)$, which we are interested in, is in general
different from $W(i,F)^\mA$.

Instead, we will rely on the proof given in~\cite{EMP} that
$W(i,F^\mA)$ is in Noether position for a generic $\mA$.  However,
we will not directly analyze the constructions used in that reference,
since they involve e.g. primary decomposition in $\C((\A_{j,k})_{1 \le
  j,k \le n})[X_1,\dots,X_n]$, and the resulting degree bounds would
be way beyond our target. We will instead combine results
from~\cite{EMP} with an effective form of the Nullstellensatz given
in~\cite{EN}; as a result, we have to use Lagrange systems to
describe polar varieties, since systems of minors do not satisfy
assumptions needed to apply this effective Nullstellensatz.
 
The rest of this section is devoted to the proof of this proposition.
From now on, we fix $i$ in $0,\dots,\delta+1$.


\subsection{Preliminaries}

Property ${\bm H}_i(2)$ states that $0$ is a regular value of the
$n+p-i$ polynomials $F,\ [L_1~\cdots~L_p]\cdot \mathrm {jac}(F,i)$ in
the open set defined by $(L_1,\dots,L_p) \ne (0,\dots,0)$; we saw that it
holds in generic coordinates. We start by establishing some
consequences of this fact for the polynomials ${\frkL}(i, F, \bm u)$
of Eq.~\eqref{eqdef:Iil}.
\begin{lemma}\label{prop:RadLagPolarV}
  Suppose that $0$ is a regular value of the $n+p-i$ polynomials
  $F,\ [L_1~\cdots~L_p]\cdot \mathrm {jac}(F,i)$ in the open set
  defined by $(L_1,\dots,L_p) \ne (0,\dots,0)$. Then, for any $\ub =
  (u_1,\hdots,u_p)$ in $\C^p$, the $n+p-i+1$ polynomials
  \begin{equation*}
    {\frkL}(i, F, \bm u) = F,\ [L_1~\cdots~L_p]\cdot \jac(F, i),\ u_1 L_1 + \cdots + u_p L_p -1
  \end{equation*}
  define a radical ideal, either trivial or $(i-1)$-equidimensional.
\end{lemma}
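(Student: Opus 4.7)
The plan is to apply the Jacobian criterion to the system ${\frkL}(i,F,\bm u)$ directly, so as to obtain radicality of the ideal and $(i-1)$-equidimensionality of the zero set $Y := V({\frkL}(i,F,\bm u))$ in a single stroke. The entire work is thus to verify that the Jacobian matrix of ${\frkL}(i,F,\bm u)$ has the maximal rank $n+p-i+1$ at every point of $Y$.

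First I would observe that any $(\xb,\bm\ell) \in Y$ satisfies $u_1\ell_1 + \cdots + u_p\ell_p = 1$, hence $\bm\ell \neq \bm 0$, so that $Y$ lies entirely in the open set where $\bm L \neq \bm 0$. By the hypothesis, the Jacobian matrix $M$ of the first $n+p-i$ polynomials $(F,\; [L_1~\cdots~L_p] \cdot \jac(F,i))$ already has full rank $n+p-i$ at every such point. The remaining task is to show that appending the row $v := (\bm 0, u_1, \dots, u_p) \in \C^{n+p}$, the gradient of the last equation $u_1 L_1 + \cdots + u_p L_p - 1$, increases the rank by exactly one.

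The key ingredient is that the polynomials $F$ and the entries of $[L_1~\cdots~L_p] \cdot \jac(F,i)$ are homogeneous in the $L$-variables of respective degrees $0$ and $1$. By Euler's identity, at any common zero of these polynomials the Euler vector $w := (\bm 0, \bm\ell) \in \C^{n+p}$ lies in the right nullspace of $M$. Pairing $w$ with $v$ yields $\bm u \cdot \bm\ell = 1 \neq 0$; hence $v$ cannot be a linear combination of the rows of $M$, and the full Jacobian of ${\frkL}(i,F,\bm u)$ indeed has rank $n+p-i+1$ throughout $Y$. From here the conclusion is standard: Krull's principal ideal theorem bounds each irreducible component of $Y$ from below by the expected dimension $i-1$, while the Jacobian rank bound on tangent spaces gives the matching upper bound, so the Jacobian criterion delivers smoothness, pure dimension $i-1$, and radicality of $\langle {\frkL}(i,F,\bm u) \rangle$. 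If instead $Y$ is empty, the Nullstellensatz forces $\langle {\frkL}(i,F,\bm u) \rangle$ to be the unit ideal, which is radical as well, covering the trivial case. The only conceptual step to be careful about is extracting information from the $\bm L$-homogeneity via Euler's identity; the rest is routine.
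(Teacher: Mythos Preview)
Your proof is correct and follows essentially the same approach as the paper: both show that the gradient $[\bm 0_{1\times n},\,u_1,\dots,u_p]$ of the affine equation is not in the row space of the Jacobian $M$ of the first $n+p-i$ polynomials, and then invoke the Jacobian criterion. The only cosmetic difference is that you phrase the key step via Euler's identity and the $\bm L$-homogeneity (producing the kernel vector $(\bm 0,\bm\ell)$ of $M$ and pairing it against $v$), whereas the paper writes out the block structure of $M$---with last $p$ columns $\left[\begin{smallmatrix}\bm 0\\ \jac(F,i)^T\end{smallmatrix}\right]$---and right-multiplies by $\bm\ell^T$; these are literally the same computation.
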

\begin{proof}
  Take $(\xb,\bm \ell)$ in $\C^{n+p}$ that cancels the $n+p-i+1$
  polynomials in~\eqref{eqdef:Iil}. We prove that the Jacobian matrix
  of these equations has full rank $n+p-i+1$ at $(\xb,\bm\ell)$;
  the conclusion then follows from the Jacobian criterion.

  Since $\bm\ell$ cannot be zero, our assumption implies that the
  Jacobian of the polynomials $F$ and $[L_1~\cdots~L_p]\cdot \mathrm
  {jac}(F,i)$ has full rank $n+p-i$ at $(\xb,\bm\ell)$. The conclusion
  therefore holds if ${\rm grad}(u_1 L_1 + \cdots + u_p L_p-1) = [
    {\bm 0}_{1 \times n} ~ u_1~ \cdots ~u_p ]$ is not in the row space
  of this matrix at $(\xb,\bm\ell)$. The Jacobian matrix of $F$ and
  $[L_1~\cdots~L_p]\cdot \mathrm {jac}(F,i)$ is equal to
  \[
  \left[ 
    \begin{array}{cc}
      \jac(F) & {\bm 0}_{p \times p}\\
     *** &  \jac(F,i)^T 
    \end{array}
    \right].
  \]
  Suppose that $[ {\bm 0}_{1 \times n} ~ u_1~ \cdots ~u_p]$ is in the
  row-space of this matrix. Considering the last $p$ columns gives us
  an equality $[u_1 ~ \cdots ~ u_p] = \bm \mu \jac(F,i)^T $, for
  some $\bm \mu$ in $\C^{1 \times (n-i)}$. Right-multiplying by $\bm
  \ell^T \in \C^{p \times 1}$, we obtain $1 = 0$, a contradiction.
\end{proof}

The result carries over to our original polynomials in generic
coordinates. In what follows, just as we defined $F^\mA$ for $\mA$ in
$\C^{n\times n}$, we define $F^{\A}=(f_1^{\A},\hdots,f_p^{\A})$
as \[(f_1(\A\Xb),\hdots,f_p(\A\Xb)) \in \C((\A_{k,m})_{1 \le k,m \le
  n})[X_1,\dots,X_n]^p.\]

\begin{corollary}\label{prop:RadLagPolarVgen}
  For any $\ub = (u_1,\hdots,u_p)$ in $\C^p$, the $n+p-i+1$
  polynomials
  \begin{equation}\label{eqdef:frkLgen}
    {\frkL}(i, F^\A, \bm u) = F^\A,\ [L_1~\cdots~L_p]\cdot \jac(F^\A, i),\ u_1 L_1 + \cdots + u_p L_p -1
  \end{equation}
  define a radical ideal in $\C((\A_{k,m})_{1 \le k,m \le
    n})[X_1,\dots,X_n,L_1,\dots,L_p]$.
\end{corollary}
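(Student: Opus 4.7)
The plan is to verify the Jacobian criterion for the ideal $I = \langle \frkL(i, F^\A, \bm u) \rangle$ viewed inside $K[\bm X, \bm L]$ with $K := \C((\A_{j,k})_{1 \le j,k \le n})$. The argument of Lemma~\ref{prop:RadLagPolarV} transfers almost verbatim to this generic setting: the only missing ingredient is the generic analogue of $\bm H_i(2)$, namely that $0$ is a regular value of the $n+p-i$ polynomials $F^\A$ and $[L_1~\cdots~L_p]\cdot \jac(F^\A, i)$ on the open set $\{(L_1,\dots,L_p) \ne (0,\dots,0)\}$, now over $\bar{K}$.

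First I would recapitulate the reduction to this regular-value statement. Let $(\xb, \bm \ell) \in \bar{K}^{n+p}$ cancel $\frkL(i,F^\A,\bm u)$; the last equation gives $\sum_j u_j \ell_j = 1$, so $\bm \ell \ne \bm 0$. Assuming the regular-value property, the sub-Jacobian of the first $n+p-i$ polynomials has full rank $n+p-i$ at $(\xb,\bm\ell)$. The extra row $[\bm 0_{1\times n}~u_1~\cdots~u_p]$, i.e.\ the gradient of $\sum_j u_j L_j - 1$, is independent of it by exactly the calculation in Lemma~\ref{prop:RadLagPolarV}: writing it as $\bm \mu \cdot \jac(F^\A,i)^T$ in the last $p$ columns and right-multiplying by $\bm \ell^T$ would force $1=0$. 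Hence the full Jacobian of $\frkL(i,F^\A,\bm u)$ has rank $n+p-i+1$ at $(\xb,\bm\ell)$, and the Jacobian criterion yields radicality of $I$ in $K[\bm X, \bm L]$.

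To establish the generic $\bm H_i(2)$ I would argue by specialization. Let $\mathcal{W} \subset \C^{n+p} \times \C^{n \times n}$ consist of those $(\xb,\bm \ell,\mA)$ with $\bm \ell \ne \bm 0$ for which $F^\mA$ and $[L_1~\cdots~L_p] \cdot \jac(F^\mA, i)$ vanish at $(\xb,\bm \ell)$ and the Jacobian with respect to $\bm X, \bm L$ has rank less than $n+p-i$ there. This is a constructible set. By Proposition~\ref{prop:Hi12} (the part giving $\bm H_i(2)$), whenever $\Delta_{i,1}(\mA) \ne 0$ the fiber of $\mathcal W$ over $\mA$ is empty, so the projection $\pi(\mathcal W) \subset \C^{n \times n}$ lies in the proper closed subset $V(\Delta_{i,1})$. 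Since $\pi(\mathcal W)$ is constructible and does not contain the generic point of the irreducible variety $\C^{n \times n}$, the generic fiber of $\pi$ is empty. This is exactly the statement that no $(\xb,\bm \ell) \in \bar{K}^{n+p}$ with $\bm \ell \ne \bm 0$ can witness failure of the regular-value condition over $\bar{K}$.

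The subtle step is this final transfer from ``image contained in a proper closed subset of $\C^{n \times n}$'' to ``empty generic fiber over $K$''; one has to remember that regularity over $\bar{K}$ is precisely the base change of $\mathcal W$ to the generic point, and a non-dense constructible image rules this out. Once this is granted, the Jacobian criterion applied over $\bar{K}$ concludes the proof and yields the claimed radicality of $\langle \frkL(i,F^\A,\bm u)\rangle$ in $K[\bm X, \bm L]$.
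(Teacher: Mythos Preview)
Your proof is correct and follows essentially the same approach as the paper: invoke Proposition~\ref{prop:Hi12} to get $\bm H_i(2)$ for a Zariski-dense set of $\mA$, and combine this with the Jacobian-criterion argument of Lemma~\ref{prop:RadLagPolarV} to obtain radicality over the function field $K=\C((\A_{k,m}))$. The only difference is the order of the two steps. The paper first applies Lemma~\ref{prop:RadLagPolarV} over $\C$ to get radicality of $\langle\frkL(i,F^{\mA},\bm u)\rangle$ for generic $\mA$, and then asserts in one line that radicality transfers to the generic fibre; you instead transfer the regular-value property $\bm H_i(2)$ to $\bar K$ via your constructible-set argument and then rerun Lemma~\ref{prop:RadLagPolarV} over $\bar K$. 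Your ordering has the virtue that the specialization step is applied to a first-order constructible condition (rank of a Jacobian) rather than to radicality itself, so the passage to the generic point is entirely explicit.
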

\begin{proof}
  Proposition~\ref{prop:Hi12} and the previous lemma show that for
  $\mA$ in a Zariski-dense subset of $\C^{n\times n}$, ${\frkL}(i,
  F^\mA, \bm u)$ is radical in $\C[X_1,\dots,X_n,L_1,\dots,L_p]$; as a
  result, this must also be the case for the ideal ${\frkL}(i, F^\A,
  \bm u)$ in $\C((\A_{k,m})_{1 \le k,m \le
    n})[X_1,\dots,X_n,L_1,\dots,L_p]$.
\end{proof}


\subsection{Degree bounds for integral dependence relationships} 

The results in Section~\ref{sec:applications} imply that $F^\A$
satisfies $\bm H_i(1)$, so that $W(i,F^\A)$ is either empty or
equidimensional of dimension $i-1$. We now point out that $F^\A$ also
satisfies $\bm H_i(3)$. In what follows, as in Eq.~\eqref{eq:frkJ}, we
let ${\frkJ}(i,F^\A)$ be the polynomials consisting of $F^\A$ and all
$p$-minors of $\jac(F^\A,i)$ in $\C((\A_{k,m})_{1 \le k,m \le
  n})[X_1,\dots,X_n]$, and we let $\mathscr{K}$ be the ideal they
generate in $\C((\A_{k,m})_{1 \le k,m \le n})[X_1,\dots,X_n]$.  In
particular, the defining ideal of $W(i,F^\A)$ is $\sqrt{\mathscr{K}}$.

Our first lemma simply recalls results from~\cite{EMP}.  In the
following two lemmas, memberships statements are all considered in
$\C((\A_{k,m})_{1 \le k,m \le n})[X_1,\dots,X_n]$; however, in the
course of the proof of Lemma~\ref{lemma:boundP}, we will work with the
same polynomials, but seen in other polynomial rings.
\begin{lemma}\label{lem:6.1}
  For $j=i,\dots,n$, there exists $Q_j$ in $\C((\A_{k,m})_{1 \le k,m
    \le n})[X_1,\dots,X_{i-1},X_j]$, monic in $X_j$ and with $Q_{j}$
  in $\sqrt{\mathscr{K}}$. Furthermore, for any prime component $\frak
  P$ of the ideal $\sqrt{\mathscr{K}}$ in $\C((\A_{k,m})_{1 \le k,m \le
    n})[X_1,\dots,X_n]$, we have $\frak P \cap \C((\A_{k,m})_{1 \le
    k,m \le n})[X_1,\dots,X_{i-1}] = \{0\}$.
\end{lemma}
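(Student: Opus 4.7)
The plan is to recognize that this lemma simply repackages the Noether position property of $W(i, F^\A)$ established in~\cite{EMP}, translated into concrete algebraic identities. So my approach is to invoke that genericity result and unpack its two ingredients: integrality and injectivity of the coordinate-ring extension along $\pi_{i-1}$.

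First, I would recall from~\cite{EMP} that for a Zariski-dense subset of $\mA \in \C^{n \times n}$, the polar variety $W(i, F^\mA)$ is in Noether position for $\pi_{i-1}$. This transfers to the generic setting over $\C((\A))$ by a standard specialization argument, giving that the extension
$$\C((\A))[X_1,\dots,X_{i-1}] \;\longrightarrow\; \C((\A))[X_1,\dots,X_n]/\sqrt{\mathscr{K}}$$
is both injective and integral. From integrality, for each $j \in \{i,\dots,n\}$ the class of $X_j$ in the quotient satisfies a monic polynomial relation with coefficients in $\C((\A))[X_1,\dots,X_{i-1}]$; lifting this relation provides the desired $Q_j \in \C((\A))[X_1,\dots,X_{i-1},X_j]$, monic in $X_j$ and lying in $\sqrt{\mathscr{K}}$.

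For the prime-component statement, I would combine the injectivity of the extension with the $(i-1)$-equidimensionality of $W(i, F^\A)$ established in Section~\ref{sec:applications}. For any minimal prime $\frak P$ over $\sqrt{\mathscr{K}}$ we have $\dim \C((\A))[X_1,\dots,X_n]/\frak P = i-1$, and the composed map
$$\C((\A))[X_1,\dots,X_{i-1}] \longrightarrow \C((\A))[X_1,\dots,X_n]/\sqrt{\mathscr{K}} \longrightarrow \C((\A))[X_1,\dots,X_n]/\frak P$$
is integral, with integral domain target of the same Krull dimension $i-1$ as the polynomial-ring source; hence its kernel must vanish, yielding $\frak P \cap \C((\A))[X_1,\dots,X_{i-1}] = \{0\}$. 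I do not expect a serious obstacle here: the lemma amounts to a dictionary entry between Noether position and its two algebraic manifestations, and the only mildly delicate point is the specialization-to-the-generic-fiber argument, which is routine.
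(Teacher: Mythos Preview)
Your approach is correct and close in spirit to the paper's, but there are two organizational differences worth noting.

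First, the paper does not go through a specialization argument: it cites \cite[Proposition~1]{EMP} directly over the field $\C((\A_{k,m})_{1\le k,m\le n})$, and in fact applies it to each prime component $\fp_\ell$ of $\sqrt{\mathscr{K}}$ individually. This immediately yields both that $\fp_\ell \cap \C((\A_{k,m}))[X_1,\dots,X_{i-1}]=\{0\}$ (injectivity) and the existence of monic $q_{\ell,j}\in\fp_\ell$ (integrality); the $Q_j$ are then built as the products $\prod_\ell q_{\ell,j}$. Your route---Noether position for the full quotient by $\sqrt{\mathscr{K}}$, then a Krull-dimension count to handle each minimal prime---is a valid alternative, but the per-prime citation makes the second assertion of the lemma immediate and spares the dimension argument.

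Second, your ``standard specialization argument'' (from generic $\mA\in\C^{n\times n}$ to the function field $\C((\A_{k,m}))$) is correct in principle but is precisely the step the paper avoids by invoking a result already stated over $\C((\A_{k,m}))$; calling it routine is fair, but it is extra work relative to the paper's one-line citation. Finally, note the trivial edge case the paper handles explicitly: if $W(i,F^\A)$ is empty then $\sqrt{\mathscr{K}}$ is the unit ideal, the map to the zero ring is not injective, and one simply takes $Q_j=1$. Your framing via injectivity of the full extension needs this case separated out.
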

\begin{proof}
  If $W(i,F^\A)$ is empty, then $\sqrt{\mathscr{K}}$ is the trivial
  ideal, so we simply take $Q_j=1$ for all $j$; the second statement
  is vacuously true.

  Otherwise, let $(\fp_\ell)_{1 \le \ell \leq L}$ be the prime
  components of $\sqrt{\mathscr{K}}$.  By assumption, $L \ge 1$
  and all $\fp_\ell$ have dimension $i-1$. By \cite[Proposition
    1]{EMP}, for all $\ell$,
  \[
    \C((\A_{k,m})_{1 \le k,m \le n})[X_1,\dots,X_{i-1}]\rightarrow\C((\A_{k,m})_{1 \le k,m \le n})[X_1,\dots,X_n]/\fp_\ell
  \] 
  is injective and integral. In particular, this means that $\fp_\ell$
  contains no non-trivial polynomial in $\C((\A_{k,m})_{1 \le k,m \le
    n})[X_1,\dots,X_{i-1}]$, as claimed. Also, it proves that polynomials
  $q_{\ell,j}\in\C((\A_{k,m})_{1 \le k,m \le
    n})[X_1,\dots,X_{i-1},X_j]$ exist, all monic in $X_j$, with
  $q_{\ell,j}\in \fp_\ell$ for each $j$ in $\{i,\hdots,n\}.$
  Thence, \[Q_{j} := \prod_{1 \le \ell\le L} q_{\ell,j}\] is monic in
  $X_j$ and satisfies $ Q_{j} \in \sqrt{\mathscr{K}}$, for each $j
  \in \{i,\hdots,n\}.$
\end{proof}
The former lemma does not directly give us degree bounds on the
polynomials $Q_j$. This is the objective of the next step, where we
control degree with respect to all unknowns involved, $X_1,\dots,X_n$
as well as $\A_{1,1},\dots,\A_{n,n}$.  In this respect, if $P$ is any
polynomial in $\C((\A_{k,m})_{1 \le k,m \le n})[X_1,\dots,X_n]$, we
will let $D \in \C[(\A_{k,m})_{1 \le k,m \le n}]$ be the minimal
common denominator of all its coefficients (defined up to a non-zero
constant in $\C$), and we will write $\overline P := D P$, so that
$\overline P$ is in $\C[(\A_{k,m})_{1 \le k,m \le n},X_1,\dots,X_n]$.
\begin{lemma}\label{lemma:boundP}
  For $j = i,\dots,n$, there exists $P_j$ in $\C((\A_{k,m})_{1 \le k,m
    \le n})[X_1,\dots,X_{i-1},X_j]$, monic in $X_j$, with $P_{j}$ in
  $\sqrt{\mathscr{K}}$ and $\deg(\overline{ P_j})\leq (2d)^{2n}.$
\end{lemma}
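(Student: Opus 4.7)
The plan is to work with the Lagrange description of $W(i,F^\A)$ rather than the minor description $\frkJ(i,F^\A)$: by Corollary~\ref{prop:RadLagPolarVgen} and the Jacobian-criterion argument of Lemma~\ref{prop:RadLagPolarV}, the Lagrange ideal $\mathfrak{L} := \langle \frkL(i,F^\A,\bm u)\rangle$ in $\C((\A))[\Xb,\Lb]$ is radical and, when non-trivial, equidimensional of dimension $i-1$, hence a complete intersection of codimension $n+p-i+1$. This complete-intersection property is precisely what supports both sharp B\'ezout bounds and the effective Nullstellensatz of~\cite{EN}, neither of which is available for the minor description because of the syzygies among the $M_{i,\cdot}$.

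First, I fix $\bm u\in\C^p$ generic enough that the $\C((\A))$-analog of Proposition~\ref{prop:projection} applies, so that $\mathfrak{L}\cap \C((\A))[\Xb]=\sqrt{\mathscr{K}}$. (If $W(i,F^\A)$ is empty, the conclusion is trivial with $P_j=1$, so from now on I assume it is $(i-1)$-equidimensional.) The intersection $\sqrt{\mathscr{K}}\cap \C((\A))[X_1,\dots,X_{i-1},X_j]$ is then a non-zero radical ideal in the UFD $\C((\A))[X_1,\dots,X_{i-1},X_j]$: non-zero because it contains the $Q_j$ of Lemma~\ref{lem:6.1}, and of height exactly one because the same lemma asserts that $\sqrt{\mathscr{K}}$ meets $\C((\A))[X_1,\dots,X_{i-1}]$ only in zero. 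I take $P_j$ to be a generator of this principal ideal; since $Q_j$ is monic in $X_j$ and $P_j\mid Q_j$ in the UFD, I normalize $P_j$ monic in $X_j$, which gives the required $P_j\in\sqrt{\mathscr{K}}$.

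For the degree bound I pass to the polynomial ring $\C[(\A_{k,m})_{1\le k,m\le n},\Xb,\Lb]$ in $n^2+n+p$ variables, now treating the $\A_{k,m}$ as unknowns. A direct check shows every equation of $\frkL(i,F^\A,\bm u)$ has total degree at most $2d$ in this larger ring: $F(\A\Xb)$ is of bi-degree $(d,d)$ in $(\A,\Xb)$, each of the $n-i$ Lagrange entries has degree one in $\Lb$ and coefficient of bi-degree at most $(d,d-1)$ in $(\A,\Xb)$, and the constraint is linear. The zero set $\tilde Z$ has generic fiber of dimension $i-1$ over $\C^{n^2}$ (by Proposition~\ref{prop:Hi12} applied fiberwise), so $\dim(\tilde Z)=n^2+i-1$ and its codimension matches the number of equations; it is a complete intersection and B\'ezout~\cite{H} yields $\deg(\tilde Z)\le (2d)^{n+p-i+1}\le (2d)^{2n}$. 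Projecting $\tilde Z$ along the coordinates $(\A,X_1,\dots,X_{i-1},X_j)$ to $\C^{n^2+i}$ does not increase degree; the defining polynomial of the component of this projection that dominates $\C^{n^2}$ recovers, fiberwise over generic $\mA$, the hypersurface $V(P_j^{\mA})$, so up to an $\A$-only factor it agrees with $\overline{P_j}$. This yields $\deg(\overline{P_j})\le (2d)^{n+p-i+1}\le (2d)^{2n}$.

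The main technical obstacle is the last identification: ensuring that $\overline{P_j}$ is recovered as a factor of bounded total degree of the defining polynomial of the projection, rather than being inflated by extraneous components of $\tilde Z$ supported over proper subvarieties of $\C^{n^2}$. Here I invoke the effective Nullstellensatz of~\cite{EN}, whose hypotheses are satisfied by our complete-intersection Lagrange system in the joint ring: it bounds the total degree of a generator of the elimination ideal $\mathfrak{L}\cap \C[(\A_{k,m})_{1\le k,m\le n},X_1,\dots,X_{i-1},X_j]$ by $(2d)^{n+p-i+1}$, and $\overline{P_j}$ (the "main" factor obtained by passage to the generic fiber over $\C((\A))$) divides this generator, completing the argument.
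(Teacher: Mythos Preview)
Your overall strategy---work with the Lagrange presentation, project, and bound the degree of the resulting hypersurface---is sound and is indeed a genuine alternative to the paper's argument. The paper instead stays with the minor description $\frkJ(i,F^\A)$: it takes $P_j$ to be the minimal polynomial of $X_j$ modulo the zero-dimensional extension $\sqrt{\mathscr{M}}$, uses Gauss's lemma to descend to $\C((\A_{k,m}))[X_1,\dots,X_{i-1},X_j]$, and then invokes \cite[Theorem~2]{DaSc04} to bound $\deg(\overline{P_j})$ by the degree of the variety $Z=V(\frkJ(i,F^\A))\subset\C^{n^2+n}$, that degree having been controlled by Corollary~\ref{coro:degree}. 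Your route avoids the appeal to~\cite{DaSc04}, which is pleasant, but it has a real gap in the final identification.

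The gap is your appeal to~\cite{EN}. That reference is an effective Nullstellensatz: given $g\in\sqrt{(f_1,\dots,f_s)}$, it bounds the degrees of coefficients in a representation $a\,g^N=\sum g_i f_i$. It does \emph{not} bound the degree of a generator of an elimination ideal, and the bound $(2d)^{n+p-i+1}$ you attribute to it is not a statement it makes. So as written, the last paragraph does not close the argument.

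Here is how to close it without~\cite{EN}. Let $Y\subset\C^{n^2+i}$ be the Zariski closure of the projection of $\tilde Z$ to the coordinates $(\A,X_1,\dots,X_{i-1},X_j)$, and let $Y'$ be the union of its components that dominate $\C^{n^2}$. You have already argued that the generic fibre of $Y'$ over $\C((\A_{k,m}))$ is the hypersurface $V(P_j)$, so $Y'$ is a hypersurface in $\C^{n^2+i}$; let $G$ be its squarefree defining polynomial, with $\deg(G)\le\deg(\tilde Z)\le(2d)^{2n}$. Over $\C((\A_{k,m}))$, $G$ and $P_j$ cut out the same reduced hypersurface, hence $\overline{P_j}=c\cdot G$ for some $c\in\C((\A_{k,m}))^\times$. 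Now observe that both $\overline{P_j}$ and $G$ are \emph{primitive} as polynomials in $X_1,\dots,X_{i-1},X_j$ over the UFD $\C[(\A_{k,m})]$: $\overline{P_j}$ is primitive because $D$ is the minimal common denominator, and $G$ is primitive because every irreducible factor of $G$ corresponds to a component of $Y'$ dominating $\C^{n^2}$ and hence lies outside $\C[(\A_{k,m})]$. Gauss's lemma over $\C[(\A_{k,m})]$ then forces $c\in\C^\times$, so $\deg(\overline{P_j})=\deg(G)\le(2d)^{2n}$.
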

\begin{proof} 
  Consider the following ideals: they all have for generators the
  polynomials $\frkJ(i,F^\A)$, that is, $F^\A$ and the $p$-minors of
  $\jac(F^\A, i)$, but they lie in different polynomial rings.
  \begin{itemize}
  \item $\mathscr{J}$ is the ideal generated by $\frkJ(i,F^\A)$ in the
    polynomial ring $\C[(\A_{k,m})_{1 \le k,m \le n},X_1,\dots,X_n]$
    in $n^2+n$ indeterminates;
  \item $\mathscr{K}$, which we already saw is generated by the
    polynomials $\frkJ(i,F^\A)$ in the polynomial ring  $\C((\A_{k,m})_{1 \le k,m \le
      n})[X_1,\dots,X_n]$ in $n$ indeterminates (this is the ideal we
    are mainly interested in);
  \item $\mathscr{M}$ is the ideal defined by the same polynomials,
    but this time in the polynomial ring $\C((\A_{k,m})_{1 \le k,m \le
      n},X_1,\dots,X_{i-1})[X_{i},\dots,X_n]$ in $n-i+1$ indeterminates.
  \end{itemize}
  \noindent{{\em Step 0: Excluding a trivial case.}} Suppose that
  $W(i,F^\A)$ is empty, or equivalently that $\mathscr{K}$ is the
  trivial ideal. In this case, we take $P_j=1$ for all $j$, and we are
  done. Henceforth, we assume that we are not in this situation.

  \smallskip\noindent{{\em Step 1: Defining the minimal polynomial $P_j$.}}  The previous
  lemma shows that every irreducible component of the zero-set
  $W(i,F^\A)$ of $\mathscr{K}$ has dimension $i-1$, and that its image
  by the projection $\pi_i$ is onto. As a result, the extended ideal
  $\mathscr{M}$ has dimension zero, and the ring extension
  \[  \C((\A_{k,m})_{1 \le k,m \le
    n},X_1,\dots,X_{i-1}) \to \C((\A_{k,m})_{1 \le k,m \le
    n},X_1,\dots,X_{i-1})[X_{i},\dots,X_n]/\sqrt{\mathscr{M}}\] is a product
  of finite field extensions. For $j=i,\dots,n$, let then $P_j$ be the
  minimal of $X_j$ in this extension. Then, $P_j$ is in
  $\C((\A_{k,m})_{1 \le k,m \le n},X_1,\dots,X_{i-1})[X_j]$ and is
  monic in $X_j$.

  \smallskip\noindent{{\em Step 2: $P_j$ is polynomial in
      $X_1,\dots,X_{i-1}$.}}  For $j$ as above, the polynomial $Q_j$
  also belongs to $\sqrt{\mathscr{M}}$, so that $P_j$ divides $Q_j$ in
  $\C((\A_{k,m})_{1 \le k,m \le n},X_1,\dots,X_{i-1})[X_j].$ We can
  therefore write
  \begin{align*}
    Q_j &= P_jR_j,~~~~ P_j,R_j \in \C((\A_{k,m})_{1 \le k,m \le n},X_1,\dots,X_{i-1})[X_j].
  \end{align*}
  It then follows by Gauss's lemma that we can write
  \begin{align*}
    Q_j = p_jr_j, ~~~~p_j,r_j \in \C((\A_{k,m})_{1 \le k,m \le n})[X_1,\dots,X_{i-1},X_j],
  \end{align*}
  such that $\mu_j \in \C((\A_{k,m})_{1 \le k,m \le n},X_1,\dots,X_{i-1})$ exists with 
  \[
  P_j = \mu_j p_j,~~~~ R_j = \mu_j^{-1}r_j.
  \]
  Since $Q_j$ is monic in $X_j$, $p_j$ and $r_j$ must also be monic in
  $X_j$, so $\mu_j$ must be the coefficient of the highest degree term
  of $P_j$ in $X_j.$ Since $P_j$ is monic in $X_j$, $\mu_j =1$ and
  hence \[P_j=1\cdot p_j=p_j \in \C((\A_{k,m})_{1 \le k,m \le
    n})[X_1,\dots,X_{i-1},X_j].\] 
  \noindent{{\em Step 3: $P_j$ is  in $\sqrt{\mathscr{K}}$.}}
  It follows that $P_j$ belongs to $\sqrt{\mathscr{M}} \cap
  \C((\A_{k,m})_{1 \le k,m \le n})[X_1,\dots,X_{i-1},X_j]$.
  Equivalently, there exists a non-negative exponent $s$ such that
  $P_j^s$ is in the intersection $\mathscr{M} \cap \C((\A_{k,m})_{1 \le k,m \le
    n})[X_1,\dots,X_{i-1},X_j]$.  Clearing denominators in the
  membership equality in $\mathscr{M}$, this means that there exists
  $D$ non-zero in $ \C[(\A_{k,m})_{1 \le k,m \le
      n},X_1,\dots,X_{i-1}]$ such that $D P_j^s$ is in $\mathscr{K}$,
  and thus in $\sqrt{\mathscr{K}}$.

  By the previous lemma, no prime component of $\sqrt{\mathscr{K}}$
  contains any non-zero polynomial in $ \C[(\A_{k,m})_{1 \le k,m \le
      n},X_1,\dots,X_{i-1}]$. As a consequence, $P_j^s$ is in
  $\sqrt{\mathscr{K}}$, and thus so is $P_j$ itself.
  
  \smallskip\noindent{{\em Step 4: Degree of $\overline{P_j}$.}}  For
  the last step of the proof, the ideal $\mathscr{J}$ is used.  Let
  indeed $Z$ be its zero-set in $\C^{n^2 + n}$. Since all polynomials in
  $F^\A$, resp.\ $\jac(F^\A, i)$, have respective degrees at most $2d$
  in $(\A_{k,m})_{1 \le k,m \le n},X_1,\dots,X_n$, resp. $2d-1$,
  Corollary~\ref{coro:degree} shows that $Z$ has degree at most
  $(2d)^{2n}$.

  Let further $Z'$ be obtained by removing from $Z$ all those
  irreducible components whose projection on the space of coordinates
  $(\A_{k,m})_{1 \le k,m \le n},X_1,\dots,X_{i-1}$ is not dense, and
  let $\mathscr{J}'$ be its defining ideal. It is a routine
  verification that the extension of $\mathscr{J}'$ in the polynomial
  ring $\C((\A_{k,m})_{1 \le k,m \le
    n},X_1,\dots,X_{i-1})[X_{i},\dots,X_n]$ is the radical of
  $\mathscr{M}$. As a result, Theorem~2 in~\cite{DaSc04} implies 
  that the total degree of $\overline{P_j}$ is bounded above by $\deg(Z)$;
  this finishes the proof.
\end{proof}


\subsection{Applying the effective Nullstellensatz}

The previous lemma could allow us to give a quantitative proof that
$\bm H_i(3)$ holds generically, if we were able to bound the degree in
$(\A_{k,m})_{1 \le k,m \le n}$ of the corresponding membership
equality in $\sqrt{\mathscr{K}}$. However, we are not aware of a
suitable effective Nullstellensatz. The best suited one, due to
D'Andrea, Krick and Sombra~\cite{EN}, requires that the number of
generators in the ideal we consider be no more than the ambient
dimension; this is in general not the case for the polynomials
${\frkJ}(i, F^\A)$.

As a result, we will use Lagrange systems instead. For $\bm u$ in
$\C^p$, recall the definition of the Lagrange system ${\frkL}(i, F^\A,
\bm u)$ given in Eq.~\eqref{eqdef:frkLgen}; let further
$\mathscr{L}_{\bm u}$ be the ideal these polynomials generate in
$\C((\A_{k,m})_{1 \le k,m \le n})[X_1,\dots,X_n,L_1,\dots,L_p]$.
Proposition~\ref{prop:projection} gives the inclusion $\sqrt{\mathscr{K}}
\subset \sqrt{\mathscr{L}_{\bm u}}$, and
Corollary~\ref{prop:RadLagPolarVgen} shows that $\mathscr{L}_{\bm u}$ is
a radical ideal, so that we have $\sqrt{\mathscr{K}} \subset
\mathscr{L}_{\bm u}$. As a consequence, the polynomials $P_j$, and
thus $\overline{P_j}$ as well, are in~$\mathscr{L}_{\bm u}$ for
any $\bm u$ in $\C^p$.

Let then $\frak u_1,\dots,\frak u_p$ be new indeterminates, and
consider the ideal $\mathscr{L}_{\bm{\frak u}}$ generated by the polynomials
${\frkL}(i, F^\A, \frak u)$ in the ring of polynomials in
$X_1,\dots,X_n,L_1,\dots,L_p$ over the field of coefficients
$\C((\A_{k,m})_{1 \le k,m \le n},\frak u_1,\dots,\frak u_p)$; the only
difference with the previous setting is that the linear form involved
in these equations is now $\frak u_1 L_1 + \cdots + \frak u_p
L_p-1$. The previous discussion implies that all polynomials
$\overline{P_j}$ belong to $\mathscr{L}_{\bm{\frak u}}$; we are now
going to apply an effective Nullstellensatz to these membership
equalities.

Let $T$ be a new variable, and let $G_1,\dots,G_{n+p-i+1}$ be the
$n+p-i+1$ polynomials in the Lagrange system ${\frkL}(i, F^\A, \frak
u)$.  For $j=i,\dots,n$ applying the
Nullstellensatz in $\C((\A_{k,m})_{1 \le k,m \le n},\frak
u_1,\dots,\frak u_p)[X_1,\dots,X_n,L_1,\dots,L_p,T]$, and clearing
denominators, we obtain the existence of $A_j$ in
$\C[(\A_{k,m})_{1 \le k,m \le n},\frak
u_1,\dots,\frak u_p]-\{0\}$ and of polynomial
coefficients $ C_{j,1},\dots,C_{j,n+p-i+1},B_j$ in $\C[(\A_{k,m})_{1
    \le k,m \le n},\frak
u_1,\dots,\frak u_p)[X_1,\dots,X_n,L_1,\dots,L_p,T]$, such that
\begin{equation}\label{eq:nullst}
A_j = \sum_{\ell=1}^{n+p-i+1} C_{j,\ell} G_\ell + B_j (1-\pjb T).
\end{equation}
Let us then see $A_j$ as a polynomial in $\frak u_1,\dots,\frak u_p$
with non-zero coefficients in $\C[(\A_{k,m})_{1 \le k,m \le n}]$, and let
$\alpha_j$ be one of these coefficients, arbitrarily chosen. We can then
define
\[\D_{i,2}:=\alpha_i \cdots \alpha_n \in \C[(\A_{k,m})_{1 \le k,m \le n}]-\{0\}.\]
With this definition of $\D_{i,2}$, we prove the following lemma.  It
almost completes the proof of Proposition~\ref{prop:Hi123}, except for
the degree bound.
\begin{lemma}\label{lem:6.4}
  If $\mA \in \C^{n\times n}$ does not cancel $\D_{i,1} \D_{i,2}$, then
  $F^{\mA}$ satisfies $\bm H_i(1)$, $\bm H_i(2)$ and $\bm H_i(3)$.
\end{lemma}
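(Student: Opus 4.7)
The plan is to leverage Proposition~\ref{prop:Hi12} for the first two properties, then use the Nullstellensatz identity~\eqref{eq:nullst} together with Proposition~\ref{prop:projection} to push the integrality-witnessing polynomials $P_j$ from the generic setting down to the specialization at $\mA$. Since $\D_{i,1}(\mA)\neq 0$, Proposition~\ref{prop:Hi12} immediately gives $\bm H_i(1)$ and $\bm H_i(2)$, so the only real work is establishing $\bm H_i(3)$. If $W(i,F^{\mA})$ is empty, $\bm H_i(3)$ is vacuous; otherwise, $\bm H_i(1)$ guarantees that it is $(i-1)$-equidimensional, and it suffices to show that the extension $\C[X_1,\dots,X_{i-1}]\to\C[X_1,\dots,X_n]/I(W(i,F^{\mA}))$ is both integral and injective.

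For integrality, I would first choose a $\bm u\in\C^p$ that simultaneously lies in the open set $\mathscr{O}\subset\C^p$ of Proposition~\ref{prop:projection} (applied to the matrix $\jac(F^{\mA},i)$ and the equations $F^{\mA}$), and outside each hypersurface $\{\bm u:A_j(\mA,\bm u)=0\}$ for $j=i,\dots,n$. The latter is possible because $\D_{i,2}(\mA)\neq 0$ forces each coefficient $\alpha_j(\mA)$ to be non-zero, so $A_j(\mA,\bm{\frak u})\in\C[\bm{\frak u}]$ is itself a non-zero polynomial; the two conditions together exclude only a proper Zariski-closed subset of $\C^p$, and irreducibility of $\C^p$ guarantees the intersection is non-empty. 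Specializing the identity $A_j=\sum_{\ell}C_{j,\ell}G_\ell+B_j(1-\overline{P_j}T)$ at $\A=\mA,\ \bm{\frak u}=\bm u$ and running the Rabinowitsch substitution $T=1/\overline{P_j}(\mA)$ with clearing of denominators produces an integer $s_j$ for which $\overline{P_j}(\mA)^{s_j}A_j(\mA,\bm u)\in\mathscr{L}_{\bm u,\mA}$. Because $A_j(\mA,\bm u)\in\C^{\times}$, we get $\overline{P_j}(\mA)\in\sqrt{\mathscr{L}_{\bm u,\mA}}\cap\C[X_1,\dots,X_n]$, which by Proposition~\ref{prop:projection} equals $\sqrt{\mathscr{K}_{\mA}}=I(W(i,F^{\mA}))$. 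Since $\overline{P_j}$ is monic in $X_j$ up to the scalar factor $D_j(\mA)$ (the common denominator used to form $\overline{P_j}$ from $P_j$), we obtain a polynomial in $I(W(i,F^{\mA}))\cap\C[X_1,\dots,X_{i-1},X_j]$ monic in $X_j$, witnessing integrality; this handles every $j\in\{i,\dots,n\}$.

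Injectivity of the extension is then a formal consequence: by integrality, each $(i-1)$-dimensional irreducible component of $W(i,F^{\mA})$ maps under $\pi_{i-1}$ to an irreducible subset of $\C^{i-1}$ of the same dimension $i-1$, hence dense, so no non-zero element of $\C[X_1,\dots,X_{i-1}]$ can vanish on $W(i,F^{\mA})$. This yields Noether position and completes $\bm H_i(3)$. The delicate step is the bookkeeping around specialization: one must verify that a single $\bm u$ serves both Proposition~\ref{prop:projection} and the non-vanishing of every $A_j(\mA,\bm u)$, and that the leading coefficient $D_j(\mA)$ of $\overline{P_j}(\mA)$ is non-zero so that the specialized polynomial remains monic in $X_j$. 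The cleanest way to guarantee the latter is to absorb the factors $D_j$ into $\D_{i,2}$ (or equivalently to preclear the denominator of $P_j$ before the Nullstellensatz step), which does not affect the degree bound qualitatively and lets the argument above go through uniformly.
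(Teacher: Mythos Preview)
Your argument mirrors the paper's: specialize the Nullstellensatz identity~\eqref{eq:nullst}, apply the Rabinowitsch substitution, and use Proposition~\ref{prop:projection} to land in $I(W(i,F^{\mA}))$; the only cosmetic differences are that the paper picks $\bm u$ afresh for each index $j$ (rather than one $\bm u$ serving all $j$) and leaves the injectivity half of Noether position implicit. Your flagged concern about the leading coefficient $D_j(\mA)$ is well taken---the paper simply asserts that the specialized $p_j$ is ``monic in $X_j$'' without verifying that $D_j$ survives specialization---and your proposed fix of folding $\prod_j D_j$ into $\D_{i,2}$ is correct and adds at most $n(2d)^{2n}$ to the degree, well within the bound of Proposition~\ref{prop:Hi123}.
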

\begin{proof}
  Let us take such a matrix $\mA$. The non-vanishing of
  $\D_{i,1}(\mA)$ already guarantees that $F^\mA$ satisfies $\bm
  H_i(1)$ and $\bm H_i(2)$. It remains to establish that $\bm H_i(3)$
  holds, that is, that if it is not empty, $W(i,F^\mA)$ is in Noether
  position for $\pi_{i-1}$. In what follows, we assume that
  $W(i,F^\mA)$ is not empty; by $\bm H_i(1)$, it is
  $(i-1)$-equidimensional.

  Fix $j$ in $i,\dots,n$. Because $\alpha_j(\mA)$ is non-zero, the
  polynomial $a_j:=A_j(\mA, \frak u_1,\dots,\frak u_p)$ is non-zero in
  $\C[\frak u_1,\dots,\frak u_p]$. We choose $\bm u=(u_1,\dots,u_p)$
  in $\C^p$ such that $a_j(u_1,\dots,u_p)$ does not vanish, and such
  that $\bm u$ lies in the open set $\mathscr{O}$ associated by
  Proposition~\ref{prop:projection} to the set $W(i,F^\mA)$.

  Let $g_1,\dots,g_{n+p-i+1}$ be the polynomials in $\frkL(i,F^\mA,\bm u)$.
  Evaluating $(\A_{k,m})_{1 \le k,m \le n}$ at the entries of $\mA$
  and $\frak u_1,\dots,\frak u_p$ at $u_1,\dots,u_p$
  in~\eqref{eq:nullst} gives a relation of the form 
  \[ \tilde a_j = \sum_{\ell=1}^{n+p-i+1} c_{j,\ell} g_\ell + b_j (1-p_j T), \]
  with $\tilde a_j=a_j(u_1,\dots,u_p) \in \C-\{0\}$, polynomials
  $c_{j,\ell}$ and $b_j$ in $\C[X_1,\dots,X_n,L_1,\dots,L_p,T]$ and
  $p_j$ in $\C[X_1,\dots,X_{i-1},X_j]$, monic in $X_j$.

  The next step is routine. Replace $T$ by $1/p_j$ in the previous
  equality; after clearing denominators, this gives a membership
  equality of the form
  \[
  p_j^k \in \langle \frkL(i,F^\mA,\bm u) \rangle
  \]
  for some integer $k \ge 1$ (we cannot have $k=0$, since we assumed
  that $W(i,F^\mA)$ is not empty). Using our assumption on $\bm u$,
  Proposition~\ref{prop:projection} then shows that $p_j^k$ is in the
  ideal generated by ${\frkJ}(i, F^\mA)$, that is, by $F^\mA$ and the
  $p$-minors of $\jac(F^\mA,i)$. In other words, $p_j$ is in the
  defining ideal of the polar variety $W(i,F^\mA)$. Repeating this for
  all $j=i,\dots,n$ proves that $W(i,F^\mA)$ is in Noether position for
  $\pi_{i-1}$.
\end{proof}

\noindent
To estimate the degree of $\D_{i,2}$, what remains is to give an upper
bound on the degrees of $\alpha_i,\dots,\alpha_n$. This will come as
an application of the effective Nullstellensatz given in~\cite{EN}
over the function field $\C((\A_{k,m})_{1 \le k,m \le n},\frak
u_1,\dots,\frak u_p)$. For this, we first need to determine degree
bounds, separately in the actual indeterminates $
X_1,\dots,X_n,L_1,\dots,L_p,T$ and in the ``constants'' $(\A_{k,m})_{1
  \le k,m \le n},\frak u_1,\dots,\frak u_p$, of the polynomials in the
membership relationship; we denote the former by $\deg_{\bm X, \bm L,
  T}$ and the latter by $\deg_{\A,\frak u}$. Then, we have
\begin{align*}
\deg_{\Xb,\Lb,T}
\left\{ G_1,\dots,G_{n+p-i+1}, \frak u_1 L_1 + \cdots + \frak u_p L_p-1\right\}
\leq d, ~
\deg_{\Xb,\Lb,T}(1-T\pjb) \leq (2d)^{2n} +1, 
\end{align*}
and 
\begin{align*}
\deg_{\A,\frak u}  
\left\{  G_1,\dots,G_{n+p-i+1}, \frak u_1 L_1 + \cdots + \frak u_p L_p-1\right\} 
\leq d~\textrm{and}
\deg_{\A,\frak u}(1-T\pjb)& \leq (2d)^{2n}.
\end{align*}
For each $j \in \{i,\hdots,n-p+1\},$ since the number of equations in
the ideal we consider is less than or equal to the ambient dimension $n+p+1$,
it follows from \cite[Theorem 0.5]{EN} that
\[
\deg(A_j)  \le (2n+2)d^{2n+1}((2d)^{2n}+1);
\]
we will use the slightly less precise bound \[\deg(A_j) \le
4n(2d)^{4n}.\] In particular, the same bound holds for the degree
of $\alpha_j$, and this gives
\[\deg(\D_{i,2}) \le 4n^2(2d)^{4n}.\]
This concludes the proof of Proposition~\ref{prop:Hi123}.


\section{Genericity of $\textbf{H}_i^{'}$ and consequences}\label{Sec:Hip}

We still consider a sequence of polynomials $F= (f_1,\hdots,f_p) \in
\C[X_1,\hdots,X_n]^p$ as before; in particular, recall we write
$\delta = n-p$ and that $d$ is an upper bound on the degrees of
$f_1,\dots,f_p$.  Besides, we now also assume that $F$ satisfies all
assumptions $\bm H_i$ (for instance, because we have already applied a
generic change of coordinates), and we prove the following.

\begin{prop}\label{prop:hi2}
 For $i=1,\dots,\delta+1$, if $F$ satisfies $\bm H_i$, there exists a
 non-zero polynomial $\Xi_{i} \in \C[S_1,\dots,S_{i-1}]$ of degree at
 most $d^{4n}$ such that if $ \bm \sigma =
 (\sigma_1,\hdots,\sigma_{i-1}) \in \C^{i-1}$ does not cancel
 $\Xi_{i}$, then $\bm \sigma$ satisfies assumption $\bm H'_i$, that
 is, $0$ is a regular value of the $n+p-1$ polynomials
  \[ X_1-\sigma_1,\dots,X_{i-1}-\sigma_{i-1},\ F,\ [L_1~\cdots~L_p]\cdot \jac(F, i)\]
  in the open set defined by $(L_1,\dots,L_p) \ne (0,\dots,0)$.
\end{prop}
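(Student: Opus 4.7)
The plan is to deduce $\bm H'_i$ from Proposition~\ref{prop:weak_t} (weak transversality), with $\bm H_i(2)$ supplying the required regular-value hypothesis. Set
\[
\Psi \colon \sO \times \C^{i-1} \longrightarrow \C^{n+p-1}, \quad
\Psi(\Xb,\Lb,\bm S) = \bigl(X_1-S_1,\ldots,X_{i-1}-S_{i-1},\ F,\ [L_1~\cdots~L_p]\cdot\jac(F,i)\bigr),
\]
where $\sO \subset \C^{n+p}$ is the open set defined by $(L_1,\ldots,L_p)\ne 0$ and $\bm S=(S_1,\ldots,S_{i-1})$ plays the role of parameter vector. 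Each of the $n+p-1$ defining polynomials has total degree at most $d$: the linear forms $X_j-S_j$ have degree one; the entries of $F$ have degree at most $d$; and each component of $[\Lb]\cdot\jac(F,i)$ is linear in $\Lb$ with coefficients of degree at most $d-1$ in $\Xb$, so of total degree $\le d$.

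The key step is verifying that $0$ is a regular value of $\Psi$ on $\sO\times\C^{i-1}$. Fix a point $(\xb,\bm\ell,\bm s)$ in $\Psi^{-1}(0)\cap(\sO\times\C^{i-1})$, so in particular $\bm\ell\ne 0$. The column of $\jac(\Psi)$ indexed by $S_j$ equals $-\bm e_j$, the $j$-th standard basis vector of $\C^{n+p-1}$, since only the equation $X_j-S_j$ depends on $S_j$; hence the $\bm S$-columns alone span the $(i-1)$-dimensional output subspace corresponding to the equations $X_j-S_j$. The remaining $(\Xb,\Lb)$-columns vanish on that subspace in the $F$ and $[\Lb]\cdot\jac(F,i)$ rows, and once projected onto the complementary $(n+p-i)$-dimensional output subspace they coincide with the Jacobian of $\bigl(F,\ [\Lb]\cdot\jac(F,i)\bigr)$ taken with respect to $(\Xb,\Lb)$. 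By $\bm H_i(2)$, this Jacobian has full row rank $n+p-i$ at $(\xb,\bm\ell)$. Combining the two contributions, the column span of $\jac(\Psi)$ exhausts $\C^{n+p-1}$, so $\jac(\Psi)$ has full row rank $n+p-1$.

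Proposition~\ref{prop:weak_t} now applies with ambient dimension $n+p$, $n+p-1$ output equations, $i-1$ parameters, and uniform degree bound $d$. It produces a non-zero $\Xi_i \in \C[S_1,\ldots,S_{i-1}]$ of degree at most
\[
d^{(n+p-1)+(n+p)} = d^{2n+2p-1} \le d^{4n},
\]
using $p \le n$, and such that for every $\bm\sigma\in\C^{i-1}$ with $\Xi_i(\bm\sigma)\ne 0$, $0$ is a regular value of $\Psi_{\bm\sigma}$ on $\sO$. Since $\Psi_{\bm\sigma}$ is precisely the tuple appearing in $\bm H'_i$, the statement follows. The sole nontrivial step is the column-rank argument that reduces the regular-value check to $\bm H_i(2)$; the degenerate case $i=1$ is harmless, since the parameter space is a point and the claim then collapses tautologically to $\bm H_1(2)$.
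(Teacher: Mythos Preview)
Your proof is correct and essentially identical to the paper's: you define the same parametrized map $\Psi$, verify via the block structure of $\jac(\Psi)$ and $\bm H_i(2)$ that $0$ is a regular value on $\sO\times\C^{i-1}$, and then invoke Proposition~\ref{prop:weak_t} with the same degree count $d^{(n+p-1)+(n+p)}\le d^{4n}$. The only cosmetic difference is that the paper displays the Jacobian as an explicit block matrix rather than arguing column by column.
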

\begin{proof}
  Let $\Psi: \C^{n+p} \times \C^{i-1} \rightarrow
  \C^{n+p-1}$ be the mapping defined by the $n+p-1$ polynomials
  \[
  X_1 - S_1, \dots, X_{i-1} - S_{i-1},\ F,\ [L_1~\cdots~L_p]\cdot \jac(F, i)
  \]
  in indeterminates $X_1,\dots,X_n,L_1,\hdots,L_p,S_1,\dots,S_{i-1}$.
  We claim that $0$ is a regular value of $\Psi$ in the open set $\Omega
  \times \C^{i-1} \subset \C^{n+p} \times \C^{i-1}$, here $\Omega
  \subset \C^{n+p}$ is defined by $(L_1,\dots,L_p) \ne (0,\dots,0)$.
  
  Consider a zero $(\xb,\bm \ell,\bm \sigma)$ of $\Psi$, with $\bm \ell$
  non-zero. Indexing columns by
  \[
  X_1,\dots,X_n,L_1,\hdots,L_p,S_1,\dots,S_{i-1},
  \]
  the Jacobian matrix of $\Psi$ is equal to
  \[
  \left[ 
    \begin{array}{cc}
      \begin{array}{cc}
        \bm I_{i-1}     & \bz_{(i-1)\times (n+p-i+1)}  
      \end{array}  &-\bm I_{i-1}\\
      \jac_{\bm X,\bm L}\left( F,\ \bm L \cdot \jac (F,i)  \right) & \bz_{(n+p-i)\times (i-1)}
    \end{array}
    \right].
  \]
  Because $\bm \ell$ is non-zero, $\bm H_i(2)$ shows that the Jacobian
  matrix $\jac_{(\bm X,\bm L)}\left(F,\ [L_1~\cdots~L_p] \cdot \jac
  (F,i) \right)$ has full rank $n+p-i$ at $(\xb,\bm \ell)$. Hence, the
  entire matrix must have full rank $n+p-1$ at $(\xb,\bm \ell,\bm
  \sigma)$, and $0$ is a regular value of $\Psi$.
  
  Since all polynomials defining $\Psi$ have degree at most $d$, it
  follows by Proposition~\ref{prop:weak_t} that there exists a non-zero
  polynomial $\Xi_{i}$ in $\C[S_1,\dots,S_{i-1}]$ of degree at most
  $d^{(n+p)+(n+p-1)}\leq d^{4n},$ with the property that, if $\Xi_{i}(\bm
  \sigma)\neq 0$ then at any root $(\xb,\bm\ell)$ of the induced 
  mapping $\psi_{\bm\sigma}$ given by
  \begin{equation}\label{eqdef:fiber}
    X_1-\sigma_1,\dots,X_{i-1}-\sigma_{i-1},\ F,\ [L_1~\cdots~L_p]\cdot \jac(F, i),  
  \end{equation}
  if $\bm \ell$ non-zero, then the Jacobian matrix of these equations
  has full rank $n+p-1$ at $(\xb,\bm\ell)$. The proposition is proved.
\end{proof}

We will use this property through the following corollary, which we
already mentioned in Subsection~\ref{ssec:algo}.
\begin{corollary}\label{coro:8}
 For $i=1,\dots,\delta+1$, if $F$ satisfies $\bm H_i$ and $\bm \sigma$
 satisfies $\bm H'_i$, then for any $\bm u=(u_1,\dots,u_p)$
 in $\C^p$, $0$ is a regular value of the $n+p$ polynomials
  \begin{equation*}
    X_1 - \sigma_1, \dots, X_{i-1} - \sigma_{i-1},\ F,\ [L_1~\cdots~L_p]\cdot \jac(F, i),\ u_1 L_1 + \cdots + u_p L_p -1.
  \end{equation*}
\end{corollary}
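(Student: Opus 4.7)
The plan is to reduce the corollary to a direct Jacobian rank computation, mirroring the argument used in Lemma~\ref{prop:RadLagPolarV}. First I would dispose of the degenerate case $\bm u = (0,\dots,0)$: then the last equation is $-1 = 0$, which has no solutions, so the regularity condition is vacuously satisfied. So assume $\bm u \ne \bm 0$. Take any zero $(\xb,\bm\ell) \in \C^{n+p}$ of the full $(n+p)$-system. The equation $u_1 L_1 + \cdots + u_p L_p - 1 = 0$ forces $\bm\ell \ne \bm 0$, so $(\xb,\bm\ell)$ lies in the open set where $\bm H'_i$ applies. By $\bm H'_i$, the Jacobian of the first $n+p-1$ polynomials has full rank $n+p-1$ at $(\xb,\bm\ell)$.

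It then suffices to show that the extra row $\mathrm{grad}(u_1 L_1 + \cdots + u_p L_p - 1) = [\bm 0_{1\times n}~~u_1~\cdots~u_p]$ is not in the row span of the Jacobian of the first $n+p-1$ polynomials at $(\xb,\bm\ell)$. Indexing columns by $X_1,\dots,X_n,L_1,\dots,L_p$, the $L$-columns of this Jacobian are all zero except on the last $n-i$ rows (coming from $[L_1~\cdots~L_p]\cdot\jac(F,i)$), where they form $\jac(F,i)^T$. Hence if $[\bm 0~~\bm u]$ were in the row span, there would exist $\bm\mu \in \C^{1\times (n-i)}$ with $\bm u = \bm\mu\, \jac(F,i)^T$ as a $1 \times p$ vector.

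Right-multiplying by the column vector $\bm\ell^T \in \C^{p\times 1}$ yields $\bm u\cdot\bm\ell^T = \bm\mu\, \jac(F,i)^T \bm\ell^T = \bm\mu\, ([L_1~\cdots~L_p]\cdot\jac(F,i))^T|_{(\xb,\bm\ell)} = 0$, since $(\xb,\bm\ell)$ cancels the Lagrange equations. But the last equation evaluated at $(\xb,\bm\ell)$ gives precisely $\bm u\cdot\bm\ell^T = 1$, a contradiction. Therefore the augmented Jacobian has full rank $n+p$ at $(\xb,\bm\ell)$, which is exactly the statement that $0$ is a regular value of the $n+p$ polynomials.

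The argument has no real obstacle: the hardest part is simply bookkeeping the column block structure of the Jacobian to isolate the $\bm L$-columns, but this is essentially the same computation already carried out in Lemma~\ref{prop:RadLagPolarV}, so I would cite that proof rather than redo the matrix manipulation in detail.
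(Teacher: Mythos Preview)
Your proof is correct and follows essentially the same route as the paper: both reduce to showing that the gradient $[\bm 0_{1\times n}~u_1~\cdots~u_p]$ is not in the row span of the Jacobian of the first $n+p-1$ equations, then isolate the $\bm L$-columns as $\jac(F,i)^T$ and right-multiply by $\bm\ell^T$ to derive the contradiction $1=0$, explicitly invoking Lemma~\ref{prop:RadLagPolarV}. Your explicit handling of the degenerate case $\bm u=\bm 0$ is a harmless addition; the paper leaves it implicit since the system then has no solutions.
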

\begin{proof}
  The proof is similar to that of Lemma~\ref{prop:RadLagPolarV}. Suppose that
  $\Xi_{i}(\bm \sigma)$ is non-zero, let $\bm u = (u_1,\dots,u_p)$ be
  arbitrary in $\C^p$ and take $(\xb,\bm \ell)$ in $\C^{n+p}$ that
  cancels the $n+p$ polynomials
  \begin{equation}\label{eqdef:fiber2} 
    X_1 - \sigma_1, \dots, X_{i-1} - \sigma_{i-1},\ F,\ [L_1~\cdots~L_p]\cdot \jac(F, i),\ u_1 L_1 + \cdots + u_p L_p -1. 
  \end{equation}
  Since $\bm\ell$ is necessarily non-zero, the previous discussion
  implies that the Jacobian of the polynomials in
  Eq.~\eqref{eqdef:fiber} has full rank $n+p-1$ at $(\xb,\bm\ell)$.
  This Jacobian matrix is equal to
  \[
  \left[ 
    \begin{array}{cc}
      \bm I_{i-1} ~ \bm 0_{(i-1) \times (n-i+1)} & \bm 0_{(i-1) \times p}\\
      \jac(F) & {\bm 0}_{p \times p}\\
      *** &  \jac(F,i)^T 
    \end{array}
    \right].
  \]
  As in the proof of Lemma~\ref{prop:RadLagPolarV}, if we suppose that
  $[ {\bm 0}_{1 \times n} ~ u_1~ \cdots ~u_p]$ is in the row-space of
  this matrix, considering the last $p$ columns and multiplying by
  $\bm\ell^T \in \C^{p\times 1}$ leads us to a contradiction. This
  proves that the Jacobian matrix of the equations in
  Eq.~\eqref{eqdef:fiber2} has full rank $n+p$ at $(\bm x,\bm \ell)$,
  as claimed.
\end{proof}


\section{Analysis of the main algorithm}\label{sec:analysis}

We conclude this paper by revisiting the algorithm sketched in
Subsection~\ref{ssec:algo}. The probability analysis is based on the
quantitative genericity results we established in the previous
sections, using the DeMillo-Lipton-Schwartz-Zippel lemma. {\em In order to
simplify some big-O estimates, we assume that the bound $d$ on the
degrees of the input polynomials satisfies $d \ge 2$, since the case
of linear polynomials is trivial.}


\subsection{Description of the pseudocode}

The algorithm is randomized and takes as input a parameter $\epsilon
\in (0,1)$; the choices made in the algorithm guarantee that the
probability of success is at least $1-\epsilon$. 

Randomness occurs in part due to the various choices we make (change
of variables $\mA$, parameter $\bm \sigma$, parameter~$\bm u$).
Besides, at Step 4, we use a minor modification of \cite[Algorithm
  2]{SH} to solve the system
\[
X_1 - \sigma_1, \dots, X_{i-1} - \sigma_{i-1},\ F^{\mA},\ [L_1~\cdots~L_p]\cdot \jac(F^{\mA}, i),\ u_1 L_1 + \cdots + u_p L_p -1.  
\]
of $n+p$ equations in $n+p$ unknowns
$X_1,\dots,X_n,L_1,\dots,L_p$. This subroutine is randomized as well;
in order to guarantee a higher probability of success, we repeat the
calculation $k$ times, for a well-chosen parameter $k$, and keep the
output with the largest cardinality (we discuss this in our
probability analysis below). Upon success, we have obtained a
zero-dimensional parameterization
$\scrQ_i=((q_i,v_{i,1},\dots,v_{i,n+p}),\lambda_i)$ of the solutions
$Z_i$ of these equations, but we are only interested in the projection
$Z'_i$ of these points on the $X_1,\dots,X_n$-space. Recall that
$\scrQ_i$ is such that
\begin{itemize}
\item $\lambda_i(v_{i,1},\dots,v_{i,n+p})=T q_i' \bmod q_i$, with
  $\lambda_i$ a $\Q$-linear form in $X_1,\dots,X_n,L_1,\dots,L_p$;
\item we have the equality $Z_i=\left \{\left(
  \frac{v_{i,1}(\tau)}{q'(\tau)},\dots,\frac{v_{i,n+p}(\tau)}{q'(\tau)}\right
  ) \ \mid \ q(\tau)=0 \right \}.$
\end{itemize}
The only constraint on $\lambda_i$ is that it take pairwise distinct
values on the points of $Z_i$. Now, since the equations defining $Z_i$
are linear in $L_1,\dots,L_p$, the projection $Z_i \to Z'_i$ is
one-to-one; this means that we can take $\lambda_i$ depending on
$X_1,\dots,X_n$ only. This constraint can be enforced at no extra cost
in the algorithm of~\cite{SH}; if this is the case, then
$\scrQ'_i=((q_i,v_{i,1},\dots,v_{i,n}),\lambda_i)$ is a
zero-dimensional parameterization of $Z'_i$.

The algorithm of~\cite{SH} also requires that the input system be
given by a straight-line program. We build it (at Step 3) in the
straightforward manner already suggested in the introduction: given
$F=(f_1,\hdots,f_p)$ in $\C[X_1,\hdots,X_n]^p$, we can build a
straight-line program that evaluates each $f_i$ in $O(d^n)$
operations, by computing all monomials of degree up to $d$,
multiplying them by the corresponding coefficients in $f_i$, and
adding results. To obtain a straight-line program for $f_i^\mA$, we
add $O(n^2)$ steps corresponding to the application of the change of
variables $\mA$. The number of operations here is thus
\[
O(nd^n + n^3) = O^{\sim}(d^n);
\]
note that here, we use the assumption $d \ge 2$.
From this, we can compute and evaluate the required partial
derivatives in the Jacobian of $F^\mA$ in
\[
O(nd^n) = O^{\sim}(d^n)
\]
operations ~\cite{BaSt83}.  Then, the matrix vector product with the
vector of Lagrange multipliers adds a cost that is polynomial in $n$
and which we can therefore neglect in the soft-O notation. Finally, we
add the linear equations $X_1-\sigma_1,\hdots,X_{i-1}-\sigma_{i-1}$;
this gives the straight-line program $\Gamma_i$, whose length is
$O^{\sim}(d^n).$

\begin{algorithm}[!h]
  \KwIn{$F=(f_1,\hdots,f_p) \in \ZZ[X_1,\hdots,X_n]^p$ with $\deg(f_i) \leq d$ and $\htt(f_i) \leq b$, and $0 < \epsilon < 1$. 
    Assume that $d \ge 2$.    }
  \KwOut{$n-p+1$ zero-dimensional parameterizations, the union of whose zeros
    includes at least one point in each connected component of $V(F) \cap \R^n$, with probability at least $1-\epsilon$.} 
  
  \nl Construct \[S := \{1,2,\hdots,\lceil 4\epsilon^{-1}5n^3(2d)^{5n} \rceil\},\] \[T :=
  \{1,2,\hdots,\lceil 4\epsilon^{-1}nd^{4n} \rceil \},\] 
  \[R :=
  \{1,2,\hdots,\lceil 4\epsilon^{-1}nd^{2n} \rceil \},\]
  and  choose  $\mA \in
  S^{n^2}$, $\bm \sigma \in T^{n-1}$ and $\ub \in R^p$ uniformly at random\; 
  
  \caption{{Main Algorithm} \label{alg:1}} 
  
  \nl \For{$i\gets1$ \KwTo $n-p+1$}{
    \nl Build a straight-line program $\Gamma_i$ that computes the equations
    \begin{equation}\label{eq:step3}
      X_1 - \sigma_1, \dots, X_{i-1} - \sigma_{i-1},\ F^{\mA},\ [L_1~\cdots~L_p]\cdot \jac(F^{\mA}, i),\ u_1 L_1 + \cdots + u_p L_p -1;        
    \end{equation}
    \nl Run \cite[Algorithm 2]{SH} $k \geq \log_2(4n/\epsilon)$ times
    with input $\Gamma_i$\;
    
    \nl Let $\mathscr{Q}_i=((q_i,v_{i,1},\dots,v_{i,n+p}),\lambda_i)$ be the highest cardinality
    zero-dimensional parameterization returned in Step 4\; 
    
    \nl Denote by $\mathscr{Q}_i^{'}=((q_i,v_{i,1},\dots,v_{i,n}),\lambda_i)$ the parameterization of the projection of $\mathscr{Q}_i$ onto the $X_1,\dots,X_n$-space\;
    
  } 
  \nl  \Return $[\mathscr{Q}^{'}_1,\hdots,\mathscr{Q}^{'}_{n-p+1}]$.
\end{algorithm}
As we already pointed out in Subsection~\ref{ssec:algo}, if $F^\mA$
satisfies ${\bm H}_i$, $\bm \sigma $ satisfies $\bm H_i^{'}$ and $\bm
u$ satisfies $\bm H_i''$, and if $\scrQ_i$ is a zero-dimensional
parametrization of the solutions of the equations~\eqref{eq:step3} at Step 3 (for all
$i \in \{1,\hdots,n-p+1\}$), Theorem~2 in~\cite{EMP} establishes that
the output returned in Step 7 will contain one point in each connected
component of $V \cap \R^n$. (The claim made in
Subsection~\ref{ssec:algo} relied on an assertion that has since
been proved, in Corollary~\ref{coro:8}.)


\subsection{Bit operation cost} 

The following lists the costs for each step of Algorithm \ref{alg:1},
assuming that the input polynomials have degree $d$ and integer
coefficients of height at most $b$.

\smallskip\noindent
(1) We defined $S := \{1,2,\hdots,\lceil 4\epsilon^{-1}5n^3(2d)^{5n} \rceil \}$
and therefore the height of any $a_{i,j} \in S$ is at most
\[
\log (4/{\epsilon}) + \log(5n^3(2d)^{5n}) \in O^{\sim}(\log (1/{\epsilon}) + n\log d).
\]
Since $|R|, |T| \le |S|,$ we also have that the height of any
$\sigma_{k} \in T$ and $u_{\ell} \in R$ admits the same upper bound.

\smallskip\noindent (3) After computing the partial derivatives in the
Jacobian matrix, the height grows by at most another factor of $\log
d$. Thus, all polynomials in the system considered at Step 3 have
height
\[
O^{\sim}(b + d(\log (1/\epsilon) + n\log(d)))
=
O^{\sim}(b + d\log (1/\epsilon) + dn).
\]
All integer coefficients appearing in the straight-line program
$\Gamma_i$ satisfy the same bound.

\smallskip\noindent (4) As a result, after applying \cite[Algorithm
  2]{SH} $k$ times for each index $i$, with $k = O(\log(n) + \log( 1 /\epsilon))$, the total boolean cost of the algorithm is
  \[
O^{\sim}(d^{3n+2p+1}\log(1/\epsilon)(b + \log(1/\epsilon)))
  \]
      where the polynomials in the output have degree at most $d^{n+p},$ and height at most
  \[
O^{\sim}(d^{n+p+1}(b + \log(1/\epsilon))).
  \]
This proves the runtime estimate, as well as our bounds on the height
of the output.


\subsection{Probability of success} 

Let $\Delta_{i,1}$ and $\Delta_{i,2} \in \C[(\frak A_{j,k})_{1\le j,k
    \le n}]$ be the polynomials from Propositions~\ref{prop:Hi12}
and~\ref{prop:Hi123}. Denote by $\Delta := \prod_{i=1}^{n-p+1}
\D_{i,1}\D_{i,2},$ and note that
\begin{align}
    \deg( \Delta) = \sum_{i=1}^{n-p+1} \deg( \D_{i,1})+\deg(\D_{i,2})
    \leq 5n^3(2d)^{5n}.
\end{align}
If $\mA \in \C^{n \times n}$ does not cancel $\Delta,$ then $\mA$ is
invertible and $F^\mA$ satisfies $\bm H_i$ for all $i$ in
$\{1,\hdots,n-p+1\}.$ Now, assuming that $\mA$ is such a matrix, let
$\Xi_i\in \C[S_1,\dots,S_{i-1}]$ be the polynomials from
Proposition~\ref{prop:hi2} applied to $F^{\mA}.$ Denote by $\Xi :=
\prod_{i=1}^{n-p+1} \Xi_i,$ and note that
\begin{align}
    \deg( \Xi)= \sum_{i=1}^{n-p+1} \deg( \Xi_i) \leq nd^{4n}.
\end{align}
If $\bm \sigma \in \C^{i-1}$ does not cancel $\Xi$, then it satisfies
$\bm H_i^{'}$ for all $i \in \{1,\hdots,n-p+1\}.$ Assume that this is
the case.

As argued in Subsection~\ref{ssec:algo}, the last condition on our
parameters is that $\bm u$ satisfy $\bm H_i''$ for all $i$. For a
given index $i$, Proposition~\ref{prop:projection} shows the existence
of a non-zero polynomial $\Upsilon_i$ in $\C[U_1,\dots,U_p]$ such that
if $\Upsilon_i(u_1,\dots,u_p)$ is non-zero, $\bm H_i''$ holds; in
addition, that proposition and Corollary~\ref{coro:degree} give an
upper bound of $d^{n+p}$ for the degree of $\Upsilon_i$. We denote by
$\Upsilon := \prod_{i=1}^{n-p+1} \Upsilon_i,$ and note that
\begin{align}
    \deg( \Upsilon) = \sum_{i=1}^{n-p+1} \deg( \Upsilon_i) \leq nd^{2n}.
\end{align}
If $\ub \in \C^{p}$ does not cancel $\Upsilon$, then $\ub$ satisfies $\bm H_i^{''}$ for all $i \in
\{1,\hdots,n-p+1\}.$

Then, the algorithm is guaranteed to succeed, as long as our calls to
Algorithm 2 in~\cite{SH} succeed in solving the equations at
Step~3. That reference establishes that by repeating the calculation
$k$ times, and keeping the output of highest degree among those $k$
results, we succeed with probability at least $1-(1/2)^k$. When
Algorithm 2 does not succeed, it either returns a proper subset of the
solutions, or FAIL. Note that Algorithm 2 is shown to succeed in a
single run with probability at least $1-11/32,$ and we bound the
probability of success with $1-1/2$ for simplicity.  Now, 
recall that we choose
 $\mA$ in $S^{n^2}$, $\bm \sigma$ in $T^{n-1}$ and $\ub$ in $R^{p}$ 
uniformly at random, with
\begin{align*}
S &= \{1,2,\hdots,\lceil 4\epsilon^{-1}5n^3(2d)^{5n}\rceil \},\\
T &= \{1,2,\hdots,\lceil 4\epsilon^{-1}nd^{4n} \rceil \},\\
R &= \{1,2,\hdots,\lceil 4\epsilon^{-1}nd^{2n} \rceil \}.
\end{align*}
Using the DeMillo-Lipton-Schwartz-Zippel lemma, we obtain
\[
\pr[\Delta(\mA)=0] \leq  \frac{\deg\Delta}{|S|} = \epsilon/4.
\]
If this is the case, then 
\[
\pr[\Xi(\bm \sigma)=0] \leq  \frac{\deg\Xi}{|T|} = \epsilon/4,
\]
and if this is the case, then
\[
\pr[\Upsilon(\ub)=0] \leq  \frac{\deg\Upsilon}{|R|} = \epsilon/4.
\]
When all this holds, for a given index $i$, Step~4 succeeds with
probability at least $1-1/2^k$, so the probability that all indices
$i$ succeed is at least $(1-1/2^k)^n$; our choice of the parameter $k$
at Step~4 ensures that this probability is at least $\epsilon/4$ as
well. Therefore, the overall probability of success is
at least
\[(1-\epsilon/4)^4 \ge 1-\epsilon.\]
This finishes the proof of Theorem~\ref{theo:main}.




\bibliographystyle{plain}
\bibliography{refs.bib}

\end{document}